\newcolumntype{C}{>{$}c<{$}}
\definecolor{mygray}{gray}{0.6}
\definecolor{mygraydark}{gray}{0.4}
\definecolor{mygraylight}{gray}{0.85}
\definecolor{spinach}{RGB}{46,139,87}
\definecolor{tomato}{RGB}{255,99,71}
\definecolor{orchid}{RGB}{143,40,194}
\definecolor{neon}{RGB}{77,77,255}
\definecolor{pumpkin}{RGB}{224,180,80}
\definecolor{citron}{RGB}{190,180,90}
\definecolor{lava}{RGB}{207,16,32}
\definecolor{cream}{RGB}{255,253,208}
\definecolor{verdigris}{RGB}{67,179,174}
\definecolor{Black}{RGB}{0,0,0}
\definecolor{mydarkblue}{RGB}{10,10,170}
\definecolor{darkspinach}{RGB}{20,70,20}
\definecolor{darktomato}{RGB}{155,40,30}
\definecolor{darkorchid}{RGB}{50,10,100}
\definecolor{darklava}{RGB}{150,8,16}
\setlist[enumerate]{itemsep=0.15cm,label=\emph{\upshape(\alph*)}}
\setlist[enumerate,2]{itemsep=0.15cm,label=\emph{\upshape(\roman*)}}
\setlist[enumerate,3]{itemsep=0.15cm,label=\emph{\upshape(\Alph*)}}
\let\emph\relax
\DeclareTextFontCommand{\emph}{\em}
\renewcommand{\dots}{\text{...}}
\newcommand{\eg}{\text{e.g.}}
\newcommand{\C}{\mathbb{C}}
\newcommand{\R}{\mathbb{R}}
\newcommand{\Rplus}{\mathbb{R}_{\geq 0}}
\newcommand{\N}{\mathbb{Z}_{\geq 0}}
\newcommand{\Z}{\mathbb{Z}}
\newcommand{\K}{\mathbb{K}}
\newcommand{\F}[1][p]{\mathbb{F}_{#1}}
\newcommand{\setstuff}[1]{\mathrm{#1}}
\newcommand{\catstuff}[1]{\mathbf{#1}}
\font\scc=rsfs10
\newcommand{\twocatstuff}[1]{\scc\mbox{#1}\hspace{1.0pt}}
\newcommand{\obstuff}[1]{\mathtt{#1}}
\newcommand{\mt}{\textit{MT}}
\newcommand{\pfdim}{\mathrm{PFdim}\,}
\newcommand{\aint}{\overline{\Z}}
\newcommand{\mainsymbol}{a}
\newcommand{\rep}{\catstuff{Rep}}
\newcommand{\proj}{\catstuff{Proj}}
\newcommand{\inj}{\catstuff{Inj}}
\newcommand{\catgen}[1][\obstuff{X}]{\{#1^{\otimes d}|d\in\N\}^{\oplus,\subset_{\oplus}}}
\newcommand{\sbim}{\catstuff{SBim}}
\newcommand{\verlinde}[1][k]{\catstuff{Ver}_{#1}(\mathrm{SL}_{2})}
\newcommand{\verlindeg}[1][k]{\catstuff{Ver}_{#1}(\mathrm{SL}_{3})}
\newcommand{\simples}[1]{\setstuff{S}(#1)}
\newcommand{\sltwo}[1][p]{\mathrm{SL}_{2}(\mathbb{F}_{#1})}
\newcommand{\rmat}[1][m]{\mathrm{Mat}_{#1}(\Rplus)}
\tikzset{anchorbase/.style={baseline={([yshift=-0.5ex]current bounding box.center)}},
tinynodes/.style={font=\tiny,text height=0.25ex,text depth=0.05ex},
smallnodes/.style={font=\scriptsize,text height=0.75ex,text depth=0.15ex},
}
\numberwithin{equation}{subsection}
\newtheorem{Theorem}{Theorem}
\newtheorem{Lemma}{Lemma}
\theoremstyle{definition}
\newtheorem{Example}{Example}
\theoremstyle{remark}
\newtheorem{Remark}{Remark}
\newtheorem{Question}{Question}
\newcommand{\nnfootnote}[1]{%
\begin{NoHyper}
\renewcommand\thefootnote{}\footnote{#1}%
\addtocounter{footnote}{-1}%
\end{NoHyper}
}
\def\makeautorefname#1#2{\csdef{#1autorefname}{#2}}
\begin{document}
\title[Asymptotics in finite monoidal categories]{Asymptotics in finite monoidal categories}
\author[A. Lacabanne, D. Tubbenhauer and P. Vaz]{Abel Lacabanne, Daniel Tubbenhauer and Pedro Vaz}

\address{A.L.: Laboratoire de Math{\'e}matiques Blaise Pascal (UMR 6620), Universit{\'e} Clermont Auvergne, Campus Universitaire des C{\'e}zeaux, 3 place Vasarely, 63178 Aubi{\`e}re Cedex, France,\newline \href{http://www.normalesup.org/~lacabanne}{www.normalesup.org/$\sim$lacabanne},
\href{https://orcid.org/0000-0001-8691-3270}{ORCID 0000-0001-8691-3270}}
\email{abel.lacabanne@uca.fr}

\address{D.T.: The University of Sydney, School of Mathematics and Statistics F07, Office Carslaw 827, NSW 2006, Australia, \href{http://www.dtubbenhauer.com}{www.dtubbenhauer.com}, \href{https://orcid.org/0000-0001-7265-5047}{ORCID 0000-0001-7265-5047}}
\email{daniel.tubbenhauer@sydney.edu.au}

\address{P.V.: Institut de Recherche en Math{\'e}matique et Physique, 
Universit{\'e} catholique de Louvain, Chemin du Cyclotron 2,  
1348 Louvain-la-Neuve, Belgium, \href{https://perso.uclouvain.be/pedro.vaz}{https://perso.uclouvain.be/pedro.vaz}, \href{https://orcid.org/0000-0001-9422-4707}{ORCID 0000-0001-9422-4707}}
\email{pedro.vaz@uclouvain.be}

\begin{abstract}
We give explicit formulas for the asymptotic growth rate of 
the number of summands in tensor powers in certain monoidal categories 
with finitely many indecomposable objects, and related structures.
\end{abstract}

\nnfootnote{\textit{Mathematics Subject Classification 2020.} Primary: 11N45, 18M05; Secondary: 16T05, 18M20, 26A12.}
\nnfootnote{\textit{Keywords.} Tensor products, asymptotic behavior, monoidal categories.}

\addtocontents{toc}{\protect\setcounter{tocdepth}{1}}

\maketitle

\tableofcontents


\section{Introduction}\label{S:Intro}


\addtocounter{subsection}{1}

Let $R=(R,C)$ be a \emph{finite based $\Rplus$-algebra} with basis $C=\{1=c_{0},\dots,c_{r-1}\}$ (recalled in \autoref{S:Proof} together with some other notions used in this introduction). 
Recall that we thus have
\begin{gather}\label{Eq:IntroBased}
c_{i}c_{j}=\sum_{k}m_{i,j}^{k}\cdot c_{k}
\quad\text{with}\quad
m_{i,j}^{k}\in\Rplus.
\end{gather}
Iterating this gives us coefficients $m_{i,j,\dots,l}^{k}\in\Rplus$. 
Similarly, for $c=a_{0}\cdot c_{0}+\dots+a_{r-1}\cdot c_{r-1},d=d_{0}\cdot c_{0}+\dots+d_{r-1}\cdot c_{r-1}\in\Rplus C$ we get, for example, $cd=\sum_{k}a_{i}d_{j}m_{i,j}^{k}\cdot c_{k}$ with 
$a_{i}d_{j}m_{i,j}^{k}\in\Rplus$.

Fix $c\in\Rplus C$.
We write $m_{n}^{\ast}(c)$ for these coefficients as 
they appear in $c^{n}$ where $\ast\in\{0,\dots,r-1\}$.
Define
\begin{gather*}
b_{n}^{R,c}:=\sum_{\ast}m_{n}^{\ast}(c)=\text{total sum of the coefficients $m_{n}^{\ast}(c)$}.
\end{gather*}
Moreover, we define the function
\begin{gather*}
b^{R,c}\colon\N\to\Rplus,n\mapsto b^{R,c}(n):=b_{n}^{R,c}.
\end{gather*}
We are interested in the \emph{asymptotic behavior} of the function 
$b^{R,c}(n)$. The main question we address is:

\begin{Question}\label{Q:IntroMain}
Find an explicit formula $\mainsymbol(n)$ such that
\begin{gather*}
b^{R,c}(n)\sim\mainsymbol(n),
\end{gather*}
where we write $\sim$ for asymptotically equal.
\end{Question}

We answer \autoref{Q:IntroMain} as follows.

The (transposed) \emph{action matrix} of $c=a_{0}\cdot c_{0}+\dots+a_{r-1}\cdot c_{r-1}\in\Rplus C$ is the matrix $(\sum_{i}a_{i}m_{i,j}^{k})_{k,j}$. Abusing language, we 
will call the 
submatrix of it corresponding to the connected component of $1$ 
also the action matrix and use this below.

Assume that the \emph{Perron--Frobenius theorem} holds, that is
the action matrix of $c\in\Rplus C$ has a leading eigenvalue $\lambda_{0}=\pfdim c$ of multiplicity one that we call 
the \emph{Perron--Frobenius dimension} of $c$. 
Moreover, the action matrix 
has some period $h\in\N$ such that $\lambda_{k}=\zeta^{k}\pfdim c$, where $\zeta=\exp(2\pi i/h)$ and $k\in\{1,\dots,h-1\}$,  
are precisely the other eigenvalues of absolute value $\pfdim c$. We will drop this assumption in \autoref{S:Proof} below.

Let us denote the right (the one with for $Mv_{i}=\lambda_{i}\cdot v_{i}$) and left
(the one with $w^{T}_{i}M=\lambda_{i}\cdot w^{T}_{i}$) eigenvectors by 
$v_{i}$ and $w_{i}$, normalized such that $w^{T}_{i}v_{i}=1$.
Let $v_{i}w_{i}^{T}[1]$ denote taking the sum of the first column of the matrix 
$v_{i}w_{i}^{T}$, and let $\aint$ denote the algebraic integers.
Define 
\begin{gather}\label{Eq:IntroMainSymbol}
\mainsymbol(n)=
\big(v_{0}w_{0}^{T}[1]\cdot 1+v_{1}w_{1}^{T}[1]\cdot\zeta^{n}+v_{2}w_{2}^{T}[1]\cdot(\zeta^{2})^{n}+\dots+v_{h-1}w_{h-1}^{T}[1]\cdot(\zeta^{h-1})^{n}\big)
\cdot(\pfdim c)^{n}\in\aint.
\end{gather}
Let $\lambda^{sec}$ be any second largest eigenvalue of the action matrix of $c$.
We will prove (see \autoref{S:Proof} below):

\begin{Theorem}\label{T:IntroMain}
We have
\begin{gather*}
b^{R,c}(n)\sim\mainsymbol(n),
\end{gather*}
and the convergence is geometric with ratio $|\lambda^{sec}/\pfdim c|$. In particular,
\begin{gather*}
\beta^{R,c}:=\lim_{n\to\infty}\sqrt[n]{b_{n}^{R,c}}=\pfdim c.
\end{gather*}
\end{Theorem}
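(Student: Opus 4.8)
The plan is to reduce the statement to the spectral decomposition of the action matrix $M$ of $c$ acting on $\Rplus C$. First I would observe that $b_n^{R,c}$ is a linear functional applied to $M^n$ applied to a fixed vector: writing $e_1$ for the standard basis vector corresponding to $1 = c_0$ and $\mathbf{1}$ for the all-ones row vector, one has $b_n^{R,c} = \mathbf{1}^T M^n e_1$, since iterating the multiplication rule \eqref{Eq:IntroBased} shows that $M^n e_1$ records exactly the coefficients $m_n^{\ast}(c)$ and summing them is the pairing with $\mathbf{1}^T$. (I would first justify restricting to the connected component of $1$: coefficients of $c^n$ supported outside that component vanish for all $n$, so passing to the submatrix does not change $b_n^{R,c}$.)

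Next I would invoke Perron--Frobenius (as assumed in the hypotheses) to diagonalise, or at least block-triangulate, $M$. The eigenvalues of maximal modulus are $\lambda_k = \zeta^k \lambda_0$ for $k = 0, \dots, h-1$ with $\lambda_0 = \pfdim c$, each simple; write $v_k, w_k$ for the corresponding right/left eigenvectors normalised by $w_k^T v_k = 1$. Then the spectral projector onto the $\lambda_k$-eigenline is $P_k = v_k w_k^T$, and $M^n = \sum_{k=0}^{h-1} \lambda_k^n\, v_k w_k^T + E_n$, where $E_n$ is the contribution of the eigenvalues of modulus $\leq |\lambda^{sec}|$ and satisfies $\|E_n\| = O(n^{d}|\lambda^{sec}|^n)$ for some fixed $d$ (the size of the largest Jordan block among the smaller eigenvalues; one may even absorb the polynomial factor by taking any modulus strictly between $|\lambda^{sec}|$ and $\lambda_0$). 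Applying $\mathbf{1}^T(-)e_1$ and noting that $\mathbf{1}^T v_k w_k^T e_1 = v_k w_k^T[1]$ in the paper's notation (sum of the first column of $v_k w_k^T$), I get
\begin{gather*}
b_n^{R,c} = \sum_{k=0}^{h-1} v_k w_k^T[1]\cdot \lambda_k^n + \mathbf{1}^T E_n e_1 = \mainsymbol(n) + O\!\big(n^d |\lambda^{sec}|^n\big),
\end{gather*}
which is exactly \eqref{Eq:IntroMainSymbol}. Dividing by $(\pfdim c)^n$ gives $b_n^{R,c}/(\pfdim c)^n = \big(\sum_k v_k w_k^T[1]\,\zeta^{kn}\big) + O\big(n^d|\lambda^{sec}/\pfdim c|^n\big)$, so the error decays geometrically with ratio $|\lambda^{sec}/\pfdim c|$, which is the claimed rate. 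To conclude $b_n^{R,c} \sim \mainsymbol(n)$ I need the leading bracket to be bounded away from $0$ infinitely often, and for the $n$-th root limit I need it bounded above and (along a subsequence) below; both follow because $v_0 w_0^T[1] > 0$ strictly by Perron--Frobenius positivity of $v_0$ and $w_0$, while the remaining terms are bounded, so $|\mainsymbol(n)|/(\pfdim c)^n$ stays in a fixed interval $[\epsilon, C]$ with $\epsilon > 0$ — whence $\sqrt[n]{b_n^{R,c}} \to \pfdim c$.

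The main obstacle is the case $h > 1$, where $\mainsymbol(n)/(\pfdim c)^n$ genuinely oscillates and can a priori come close to zero for some residues of $n \bmod h$; one must check that $b_n^{R,c} \sim \mainsymbol(n)$ still makes sense (i.e. their ratio tends to $1$) rather than merely $b_n^{R,c} = \mainsymbol(n) + o((\pfdim c)^n)$. This requires showing $\mainsymbol(n) \neq 0$ for all large $n$ — equivalently that the trigonometric-polynomial-times-exponential $\sum_k v_k w_k^T[1]\zeta^{kn}$ has no zeros — which I would handle by relating the period $h$ of $M$ to the cyclic structure it induces: the $h$ eigenvalues $\lambda_k$ together with their projectors reassemble, after grouping $n$ by residue mod $h$, into the Perron data of the $h$ disjoint primitive blocks of $M^h$, each of which has a strictly positive leading term; so within each residue class $b_n^{R,c}$ is asymptotic to a single positive exponential $C_j (\pfdim c)^n$ with $C_j > 0$, and this is precisely what $\mainsymbol(n)$ computes. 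A secondary technical point is making the $O$-estimate on $E_n$ clean when the smaller eigenvalues are non-semisimple; this is routine via the Jordan form but should be stated carefully, and it is the reason the convergence is "geometric with ratio $|\lambda^{sec}/\pfdim c|$" only up to a harmless polynomial factor that one can hide by enlarging the ratio infinitesimally (or the statement tacitly means geometric rate of the logarithm, which is what the $n$-th root conclusion records).
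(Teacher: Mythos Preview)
Your proposal is correct and follows essentially the same route as the paper: both express $b_n^{R,c}$ as the sum of the first column of $M(c)^n$ (your $\mathbf{1}^{T}M^{n}e_{1}$ is exactly the paper's $M(c)^{n}[1]$), then invoke the Perron--Frobenius spectral expansion $M^{n}=\sum_{k}\lambda_{k}^{n}v_{k}w_{k}^{T}+(\text{lower order})$ to read off $\mainsymbol(n)$ and the geometric error. Your discussion is in fact more careful than the paper's terse argument on two points it leaves implicit: the possible vanishing of $\mainsymbol(n)$ when $h>1$, and the polynomial prefactor from non-semisimple subleading eigenvalues (the paper absorbs the latter silently into its definition of geometric convergence, which only pins down the exponential rate).
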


The reason why \autoref{T:IntroMain} is interesting from the 
categorical point of view is the following.
For us a \emph{finite monoidal category}
is a category such that:
\begin{enumerate}[label=(\roman*)]

\item It is monoidal.

\item It is additive Krull--Schmidt.

\item It has finitely many (isomorphism classes of) indecomposable objects.

\end{enumerate}

\begin{Example}\label{E:IntroFiniteCats}
Here are a few examples:
\begin{enumerate}

\item Let $G$ be a finite group and consider $\rep(G)=\rep(G,\C)$ the category of finite dimensional complex representations of $G$. This is a prototypical example 
of a finite monoidal category.

\item More generally, all fusion categories are finite monoidal.

\item For a finite group $G$, and arbitrary field, we can consider 
finite dimensional projective $\proj(G)$ or injective $\inj(G)$ representations. These are 
finite monoidal categories. More generally, one can take any finite dimensional Hopf algebra instead of a finite group.

\item If we assume that a Hopf algebra $H$ is of finite type, then we can even consider $\rep(H)$ (finite dimensional $H$-representations). An explicit and nonsemisimple example over $\C$ is the Taft algebra by \cite[Theorem 2.5]{ChVaOyZh-green-taft}.

\item In any additive Krull--Schmidt monoidal category one can take
$\catgen$, the additive idempotent completion of the full subcategory generated by an object $\obstuff{X}$, as long as this has finitely 
many indecomposable objects. Explicitly, for a finite group $G$ one can take any two dimensional $G$-representation for $\obstuff{X}$, which follows from 
\cite{Al-proj-sl2}. There are 
many more examples, see {\eg} \cite{Cr-tensor-simple-modules}.

\item Consider Soergel bimodules $\sbim(W)$ as in \cite{So-hcbim}.
These are finite monoidal categories if $W=(W,S)$ is of finite Coxeter type.

\end{enumerate}
There are of course many more examples.
\end{Example}

The following is very easy and omitted:

\begin{Lemma}\label{L:IntroGrothendieck}
The additive Grothendieck ring of a finite monoidal category
is a finite based $\Rplus$-algebra with basis given 
by the classes of indecomposable objects.
\qed
\end{Lemma}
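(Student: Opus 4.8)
The plan is to unwind the definitions and to observe that the additive Krull--Schmidt property and the finiteness of the set of indecomposables are exactly what the notion of a finite based $\Rplus$-algebra asks for. I would start by recalling the construction of the additive (split) Grothendieck ring $K_{0}^{\oplus}(\catstuff{C})$ of a monoidal category $\catstuff{C}$: as an abelian group it is the free abelian group on isomorphism classes $[X]$ of objects modulo the relations $[X\oplus Y]=[X]+[Y]$, and the product is induced by $[X]\cdot[Y]=[X\otimes Y]$. This is a well-defined ring because in an additive monoidal category $\otimes$ preserves finite direct sums in each variable, so $[{-}\otimes{-}]$ is biadditive and descends to the quotient, while associativity and unitality of the product are inherited from the associator and unitor of $\catstuff{C}$. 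Taking nonnegative real combinations of the distinguished basis produced below turns this into the $\Rplus$-algebra named in the statement.

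Next I would invoke the Krull--Schmidt property: every object is isomorphic to a finite direct sum of indecomposable objects, uniquely up to permutation and isomorphism of the summands. Consequently the classes $[X_{0}],\dots,[X_{r-1}]$ of the indecomposable objects form a $\Z$-basis of $K_{0}^{\oplus}(\catstuff{C})$, and $r<\infty$ by the finiteness axiom. For the structure constants, write $X_{i}\otimes X_{j}\cong\bigoplus_{k}X_{k}^{\oplus m_{i,j}^{k}}$; then $[X_{i}]\cdot[X_{j}]=\sum_{k}m_{i,j}^{k}\cdot[X_{k}]$ with each multiplicity $m_{i,j}^{k}\in\N\subseteq\Rplus$, which is precisely the shape \eqref{Eq:IntroBased} required of a based $\Rplus$-algebra.

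Finally, the monoidal unit $\munit$ is an object, hence $[\munit]$ is an $\N$-combination of the $[X_{k}]$ and is the multiplicative identity by the unitality above; under the standing conventions (and in every case of \autoref{E:IntroFiniteCats}) $\munit$ is indecomposable, so after reindexing $X_{0}=\munit$ we may set $c_{0}=[\munit]=1$ and $c_{i}=[X_{i}]$, exhibiting $K_{0}^{\oplus}(\catstuff{C})$ as a finite based $\Rplus$-algebra with basis the classes of indecomposables. There is no genuine obstacle here; the only points worth a sentence are the biadditivity of $\otimes$ that makes the ring structure well defined, and --- should one not wish to assume $\munit$ indecomposable --- the harmless convention of letting $c_{0}$ be the still canonically defined class $[\munit]$.
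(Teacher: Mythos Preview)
Your argument is correct and is precisely the routine verification the authors have in mind; the paper omits the proof entirely (the lemma carries only a bare \qed, prefaced by ``very easy and omitted''). One cosmetic point: in the paper's terminology a finite based $\Rplus$-algebra is a $\K$-algebra (here $\K=\Z$) whose structure constants happen to lie in $\Rplus$, so there is no need to pass to $\Rplus$-linear combinations --- the Grothendieck ring over $\Z$ already qualifies.
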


Fix a finite monoidal category 
$\catstuff{C}$ and an object $\obstuff{X}\in\catstuff{C}$.
Following \cite{CoOsTu-growth}, we define 
\begin{gather*}
b_{n}^{\catstuff{C},\obstuff{X}}:=\#\text{indecomposable summands in $\obstuff{X}^{\otimes n}$ counted with multiplicities}.
\end{gather*}
Note that $\mainsymbol(n)$ has an analog in this context, denoted by the same symbol, obtained for 
the (transposed) action matrix for left tensoring. Similarly as before we also have $\lambda^{sec}$.
We then get:

\begin{Theorem}\label{T:IntroMainTwo}
Under the same assumption as in \autoref{T:IntroMain}, we have
\begin{gather*}
b^{\catstuff{C},\obstuff{X}}(n)\sim\mainsymbol(n),
\end{gather*}
and the convergence is geometric with ratio $|\lambda^{sec}/\pfdim\obstuff{X}|$. 
In particular,
\begin{gather*}
\beta^{\catstuff{C},\obstuff{X}}:=\lim_{n\to\infty}\sqrt[n]{b_{n}^{\catstuff{C},\obstuff{X}}}=\pfdim\obstuff{X}.
\end{gather*}
\end{Theorem}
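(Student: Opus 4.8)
The plan is to deduce \autoref{T:IntroMainTwo} from \autoref{T:IntroMain} by passing to the additive Grothendieck ring. By \autoref{L:IntroGrothendieck} the additive Grothendieck ring $R$ of $\catstuff{C}$ is a finite based $\Rplus$-algebra whose basis $C$ is given by the classes of the indecomposable objects, with $c_{0}=[\munit]$ the class of the monoidal unit. Set $c=[\obstuff{X}]\in\Rplus C$. Since $\catstuff{C}$ is additive Krull--Schmidt, the decomposition of $\obstuff{X}^{\otimes n}$ into indecomposables is unique up to isomorphism and reordering; hence, writing $c^{n}=[\obstuff{X}^{\otimes n}]=\sum_{k}m_{n}^{k}(c)\cdot c_{k}$, the structure constant $m_{n}^{k}(c)$ is exactly the multiplicity of the $k$-th indecomposable object as a summand of $\obstuff{X}^{\otimes n}$. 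Summing over $k$ gives
\[
b_{n}^{\catstuff{C},\obstuff{X}}=\sum_{\ast}m_{n}^{\ast}(c)=b_{n}^{R,c}\qquad\text{for all }n\in\N ,
\]
so that $b^{\catstuff{C},\obstuff{X}}=b^{R,c}$ as functions $\N\to\Rplus$.

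It remains to match the remaining data. By construction the (transposed) action matrix for left tensoring by $\obstuff{X}$ on the classes of indecomposables is the (transposed) action matrix of $c=[\obstuff{X}]$ in $R$, and restricting to the connected component of $c_{0}=[\munit]$ changes nothing for our purposes: $\obstuff{X}^{\otimes 0}=\munit$ lies in that component, and if an indecomposable $\obstuff{Y}$ lies in it then so does every summand of $\obstuff{X}\otimes\obstuff{Y}$, so inductively no summand of any $\obstuff{X}^{\otimes n}$ is lost. Therefore $\mainsymbol(n)$, a second largest eigenvalue $\lambda^{sec}$, and $\pfdim\obstuff{X}$ as defined in the categorical setting coincide with the corresponding quantities $\mainsymbol(n)$, $\lambda^{sec}$, $\pfdim c$ of the based algebra $(R,c)$.

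Now \autoref{T:IntroMain} applied to $(R,c)$ yields $b^{R,c}(n)\sim\mainsymbol(n)$ with geometric convergence of ratio $|\lambda^{sec}/\pfdim c|$ and $\beta^{R,c}=\pfdim c$; transporting these statements along the identifications of the previous two paragraphs gives precisely the three assertions of \autoref{T:IntroMainTwo}. The only thing that genuinely needs care is the bookkeeping of the first paragraph --- that Krull--Schmidt multiplicities in $\catstuff{C}$ are the structure constants of $R$ --- together with the observation that passing to the connected component of the unit discards no summand of any tensor power of $\obstuff{X}$; beyond this the proof is a formality. Note also that the reduction uses the Perron--Frobenius hypothesis only through \autoref{T:IntroMain}, so the assumption-free version of that theorem established in \autoref{S:Proof} transfers to $\catstuff{C}$ verbatim.
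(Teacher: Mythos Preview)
Your proof is correct and follows exactly the paper's approach: the paper's proof of \autoref{T:IntroMainTwo} is the single line ``From \autoref{T:IntroMain} and \autoref{L:IntroGrothendieck}'', and you have simply unpacked this reduction with the natural bookkeeping (Krull--Schmidt multiplicities becoming structure constants, matching of action matrices, and the harmlessness of restricting to the unit's component). There is nothing to add or correct.
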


\begin{proof}
From \autoref{T:IntroMain} and \autoref{L:IntroGrothendieck}.
\end{proof}

In the next section we will discuss examples of \autoref{T:IntroMainTwo}, 
and then we will prove \autoref{T:IntroMain}. 
We also generalize these two theorems in \autoref{S:Proof} by getting rid of 
the assumption on the action matrix.
Before that, 
let us finish the introduction with some (historical) remarks.

\begin{Remark}\label{R:IntroHistory}
\leavevmode

\begin{enumerate}

\item To study asymptotic properties of tensor powers is a rather new 
subject and most things are still quite mysterious. 
Let us mention a few facts that are known.
An early reference we know is \cite{Bi-asymptotic-lie}, which studies 
questions similar to the one in this note but for Lie algebras, and this was carried 
on in several works such as \cite{PoRe-mult-large-tensor-powers}.
As another example, the paper \cite{BeSy-non-projective-part} studies the 
growth rate of the dimensions of the non-projective part of tensor powers 
of a representation of a finite group. More generally, the paper \cite{CoEtOs-frobenius-exact}
studies, working in certain tensor categories, the growth rates of summands of categorical dimension prime to the underlying characteristic. The paper \cite{CoOsTu-growth} 
studies the growth rate of all summands, while \cite{KhSiTu-monoidal-cryptography}
studies the Schur--Weyl dual question.

\item \autoref{T:IntroMain} and \autoref{T:IntroMainTwo} 
generalize \cite[Proposition 2.1]{CoEtOs-growth-mod-p}. And for us one of the main features of that proposition is it simplicity, having a 
simple statement and proof. As we will see, the same is true 
for \autoref{T:IntroMain} and \autoref{T:IntroMainTwo} as well: Clearly, 
the statements themselves are (surprisingly) simple yet general. Moreover,  
the proof of \autoref{T:IntroMain}, and therefore the proof 
of \autoref{T:IntroMainTwo} as well, is rather straightforward 
as soon as the key ideas are in place. 

\item The second statements in \autoref{T:IntroMain} and \autoref{T:IntroMainTwo} were already 
observed in \cite{CoOsTu-growth} (in the setting of \cite{CoOsTu-growth} the Perron--Frobenius dimension agrees with the usual dimension), but the (finer) asymptotic behavior appears to be new.

\end{enumerate}
Finally, let us mention that similar 
questions have been studied much earlier, see for example \cite{AlEv-representations-quillen} for 
a related notion involving length of projective resolutions, or \cite{LoSh-random-young}
for counting and Young diagrams.
\end{Remark}

\noindent\textbf{Acknowledgments.}
We like to thank Kevin Coulembier, 
Pavel Etingof and Victor Ostrik for 
very helpful email exchanges, and the referee for a careful reading of our document. DT thanks randomness for 
giving them/us the key idea underlying this note.

This project was in part supported by Universit{\'e} Clermont Auvergne 
and Universit{\'e} catholique de Louvain, which is gratefully acknowledged.
DT was supported by the Australian research council, and
PV was supported by the Fonds de la Recherche Scientifique-FNRS under Grant no. CDR-J.0189.23.


\section{Examples}\label{S:Examples}


Let us call \autoref{T:IntroMain} and \autoref{T:IntroMainTwo} 
our \emph{main theorem(s)} or {\mt} for short.
To underpin the explicit nature of these theorems, we now list examples 
{\mt} applies. We also add that all the below 
can be double checked using the code on \cite{LaTuVa-code-growth}. 
That page also contains a (potentially empty) Erratum.

Let us briefly explain why {\mt} 
can be used in all the examples discussed below:
For \autoref{SS:ExamplesFiniteGroups} this follows since our assumption 
on $V$ implies that the action matrix is irreducible.
For all other examples a direct calculation verifies that the action matrices 
satisfy the Perron--Frobenius theorem.


\subsection{Finite groups}\label{SS:ExamplesFiniteGroups}


Let $G$ be a finite group. Given a finite dimensional complex $G$-representation 
$V$, denote by $Z_{V}(G)\subset G$ the 
subgroup consisting of elements of $g$ that acts as a scalar on $V$ and by 
$\omega_{V}(g)\in\C$ the corresponding scalar. If $V$ is simple, 
then $\omega_{V}$ is known as the \emph{central character} of $V$. 

Suppose that $V$ is a faithful $G$-representation. 
Since $V$ is faithful we get that $Z_{V}(G)$ is a subgroup of $Z(G)$ 
and also that the action graph of tensoring with $V$ is connected (in the oriented sense). Then {\mt} implies:
\begin{gather}\label{Eq:ExamplesFiniteGroups}
\mainsymbol(n)= 
\left(\frac{1}{\#G}
\sum_{g\in Z_{V}(G)}
\Big(\sum_{L\in\simples{G}}\omega_L(g)\dim_{\C}L\Big)\cdot
\omega_{V}(g)^{n}\right)\cdot(\dim_{\C}V)^{n},
\end{gather}
where $\simples{G}=\{\text{simple $G$-representations}\}/\cong$.
This follows directly from {\mt} after recalling the 
connection from Perron--Frobenius theory to character theory 
as explained in {\eg} \cite[Chapter 3 and Example 4.5.5]{EtGeNiOs-tensor-categories}. To elaborate a bit, the Perron--Frobenius dimension in this case is just the dimension, and the leading eigenvector corresponds to the regular $G$-representation.

\begin{Remark}\label{R:ExamplesFiniteGroups}
If $V$ is not faithful, then the action graph of tensoring with $V$ needs not to be connected, 
but that is not an issue in {\mt}.
Thus, the assumption that $V$ is faithful can be easily relaxed.
\end{Remark}

\begin{Remark}\label{R:ExamplesFiniteGroupsTwo}
Alternatively one can prove \eqref{Eq:ExamplesFiniteGroups} using character theory, similarly to 
\cite[Proposition 2.1]{CoEtOs-growth-mod-p}. \eqref{Eq:ExamplesFiniteGroups} still generalizes
\cite[Proposition 2.1]{CoEtOs-growth-mod-p}.
\end{Remark}

Let us give a few explicit examples.

\begin{Example}[Dihedral groups]\label{E:ExamplesDihedral}
Let $m\in\Z_{\geq 3}$ and let $G$ be 
the \emph{dihedral group} of order $2m$. 
Let $m^{\prime}=m/2$, if $m$ is even, and 
$m^{\prime}=(m-1)/2$, if $m$ is odd. Choose $V$ any faithful 
representation of dimension $2$ of $G$. Then \eqref{Eq:ExamplesFiniteGroups} 
gives the formulas
\begin{gather*}
\mainsymbol(n) =
\begin{cases}
\frac{m+1}{2m}\cdot 2^{n} & \text{if } m \text{ is odd},
\\
\frac{m+2}{2m}\cdot 2^{n} & \text{if } m \text{ is even and }m^{\prime}\text{ is odd},
\\
\left(\frac{(m+2)}{2m}\cdot 1+\frac{1}{m}\cdot(-1)^{n}\right)\cdot 2^{n} & \text{if }m\text{ is even and } m^{\prime}\text{ is even}.
\end{cases}
\end{gather*}
Two explicit examples are $m\in\{4,5\}$ and $V$ is the $G$-representation corresponding to 
rotation by $2\pi/m$. Then:
\begin{gather*}
m=4\colon
\begin{tikzpicture}[anchorbase]
\node at (0,0) {\includegraphics[height=2.7cm]{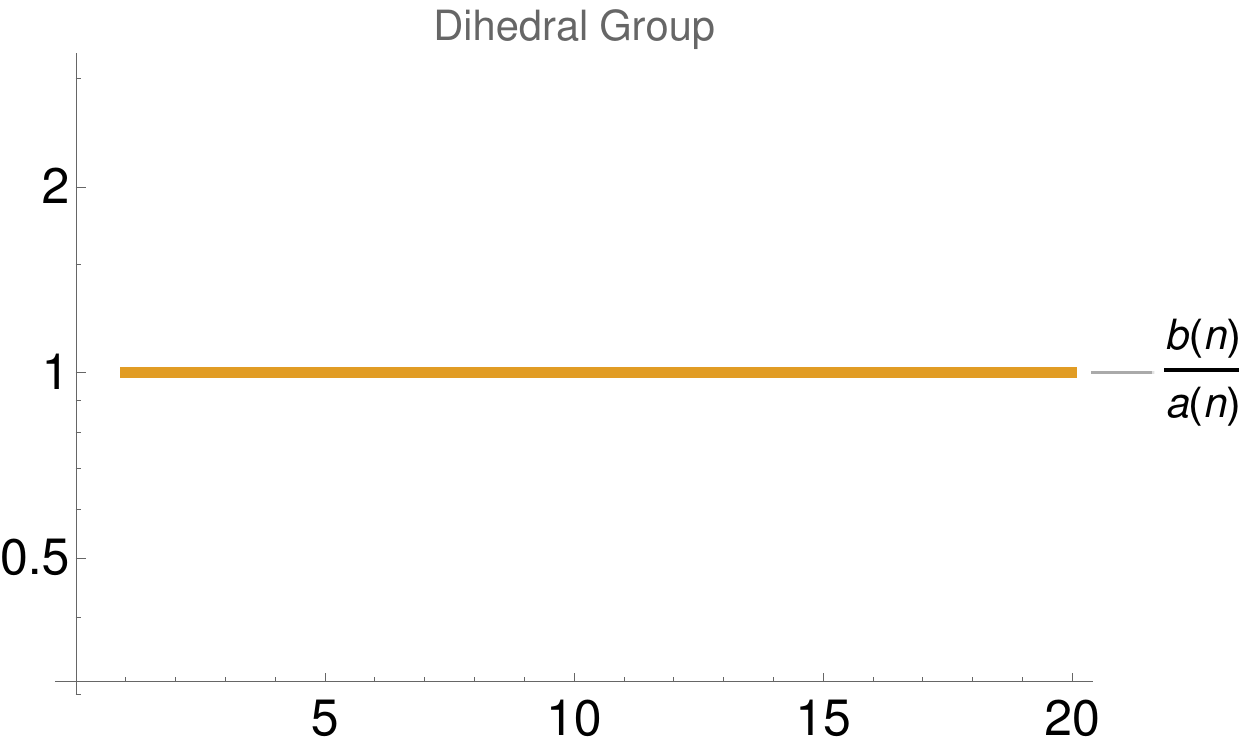}};
\end{tikzpicture}
,\quad
m=5\colon
\begin{tikzpicture}[anchorbase]
\node at (0,0) {\includegraphics[height=2.7cm]{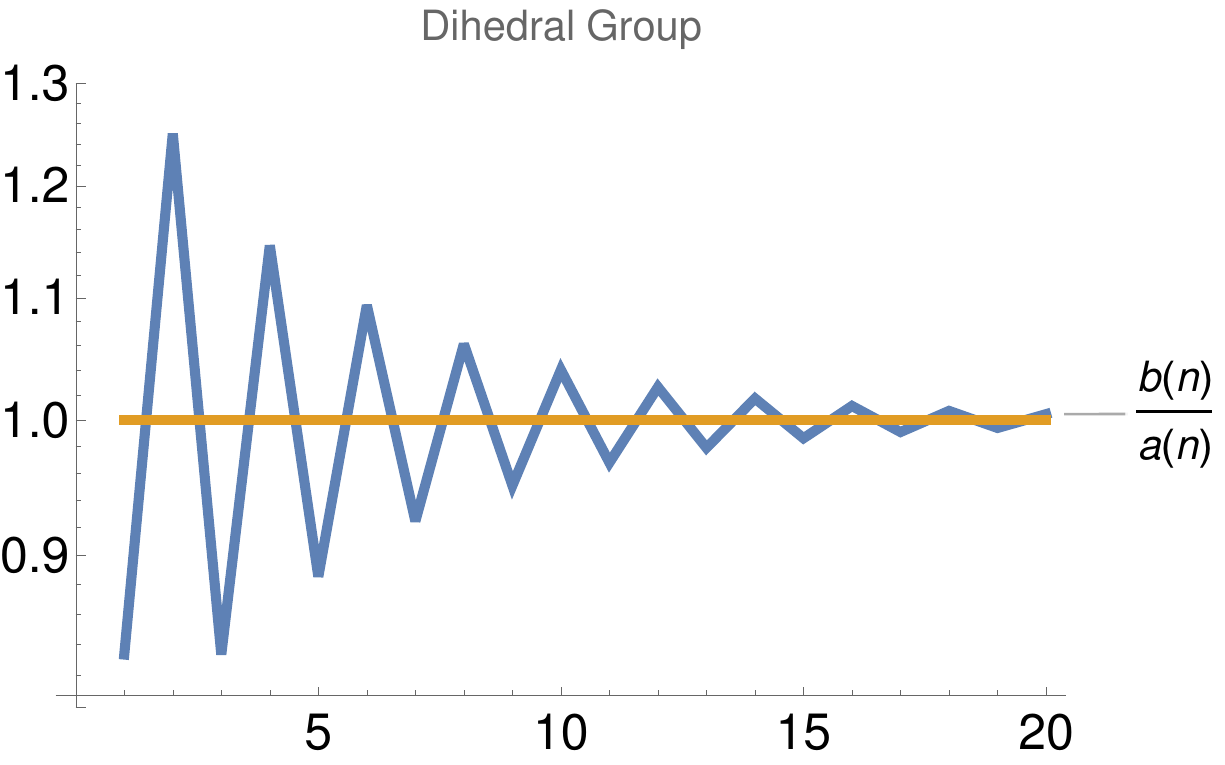}};
\end{tikzpicture}
.
\end{gather*}
Here and throughout, we display the graphs of $b(n)/\mainsymbol(n)$ in the usual way but log plotted (on the $y$-axis). Moreover, for $m=4$ we have $b(n)=\mainsymbol(n)$ and we will omit plots in case that happens.
\end{Example}

The next example can be seen as a $p>2$ version of \autoref{E:ExamplesDihedral}.

\begin{Example}[Extraspecial groups]\label{E:ExamplesExtra}
Let $p$ be a prime and $m\in\Z_{\geq 1}$. 
Recall that a $p$-group of order $p^{1+2m}$ is called 
\emph{extraspecial} if its center $Z(G)$ is of order 
$p$ and the quotient $G/Z(G)$ is a $p$-elementary 
abelian group. For each $p$ and $m$, there exists 
two isomorphism classes of extraspecial groups of 
order $p^{1+2m}$, and they have the same character table. Thus, 
by \eqref{Eq:ExamplesFiniteGroups} we can take any of these two without difference.
In the special case $p=2$ and 
$m=1$ we recover the dihedral group and the 
quaternion group of order $8$.

Fix now an extraspecial group $G$ of order $p^{1+2m}$. The simple $G$-representations are given as follows:
\begin{enumerate}[label=(\roman*)]

\item There are $p^{2m}$ nonisomorphic one dimensional representations that arise from the representations of $G/Z(G)$.

\item There are $p-1$ nonisomorphic simple representations of dimension $p^{m}$ which are characterized by their central character.

\end{enumerate}
Choose $V$ any of the simple $G$-representation of dimension $p^{m}$. Then $Z_{V}(G)=Z(G)$ and
\eqref{Eq:ExamplesFiniteGroups} gives
\begin{gather*}
\mainsymbol(n)= 
\begin{cases}
(p^m)^{n} & \text{if }p\mid n, 
\\
(p^m)^{n-1} & \text{otherwise}.
\end{cases}
\end{gather*}
It turns out that this formula is not only asymptotic: we have $b(n)=\mainsymbol(n)$. 
This is due to the fact that the character of $V$ vanishes outside of $Z(G)$.
\end{Example}

\begin{Example}[Imprimitive complex reflection groups]\label{E:ExamplesCRG}
Let $d$ and $m$ be integers in $\Z_{\geq 1}$ and consider the \emph{imprimitive complex reflection group} $G=G(d,1,m)$. This group can be seen as the group of $m$-by-$m$ monomial matrices with entries being $d$th roots of unity. For $d=1$ we recover the symmetric group, covered by \cite[Example 2.3]{CoEtOs-growth-mod-p}, and if $d=2$ we recover the Weyl group of type $B_{m}$.

Choose $V$ the standard representation given by the matrix description of $G$. 
Then \eqref{Eq:ExamplesFiniteGroups} gives a formula akin to 
\cite[Example 2.3]{CoEtOs-growth-mod-p} which we decided not to write down as its a bit tedious.

In any case, for the special cases $d\in\{1,2\}$ and $m=2$, or $d=2$ and $m=4$ we get
\begin{gather*}
\left\{
\begin{gathered}
d=1,
\\[-0.1cm]
m=3 
\end{gathered}
\right.
\colon
\mainsymbol(n)=\frac{2}{3}\cdot 3^{n}
,\quad
\left\{
\begin{gathered}
d=2,
\\[-0.1cm]
m=3 
\end{gathered}
\right.
\colon\mainsymbol(n)=\frac{5}{12}\cdot 3^{n}
,\quad
\left\{
\begin{gathered}
d=2,
\\[-0.1cm]
m=4
\end{gathered}
\right.
\colon\mainsymbol(n)=\left(\frac{19}{96}\cdot 1+\frac{1}{32}\cdot (-1)^{n}\right)\cdot 4^{n}
.
\end{gather*}
We get the plots
\begin{gather*}
\left\{
\begin{gathered}
d=2,
\\[-0.1cm]
m=3
\end{gathered}\right.\colon
\begin{tikzpicture}[anchorbase]
\node at (0,0) {\includegraphics[height=2.7cm]{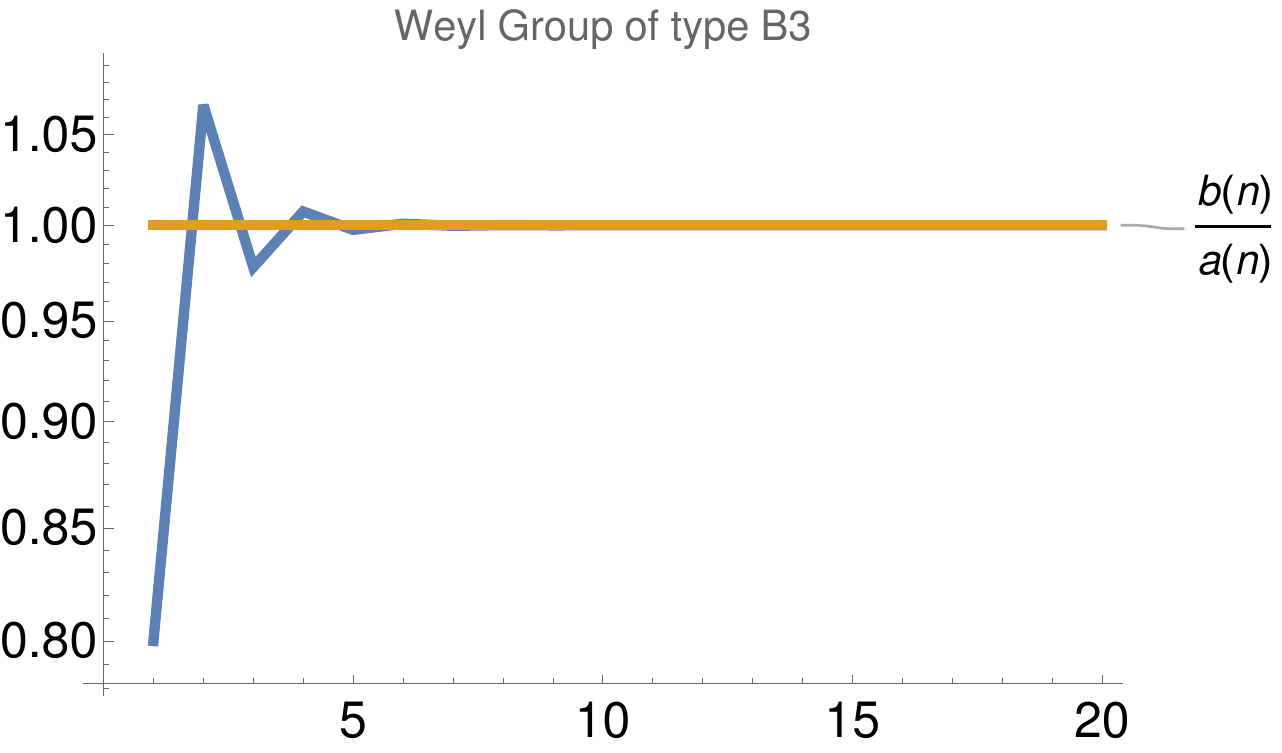}};
\end{tikzpicture}
,\quad
\left\{
\begin{gathered}
d=2,
\\[-0.1cm]
m=4
\end{gathered}\right.\colon
\begin{tikzpicture}[anchorbase]
\node at (0,0) {\includegraphics[height=2.7cm]{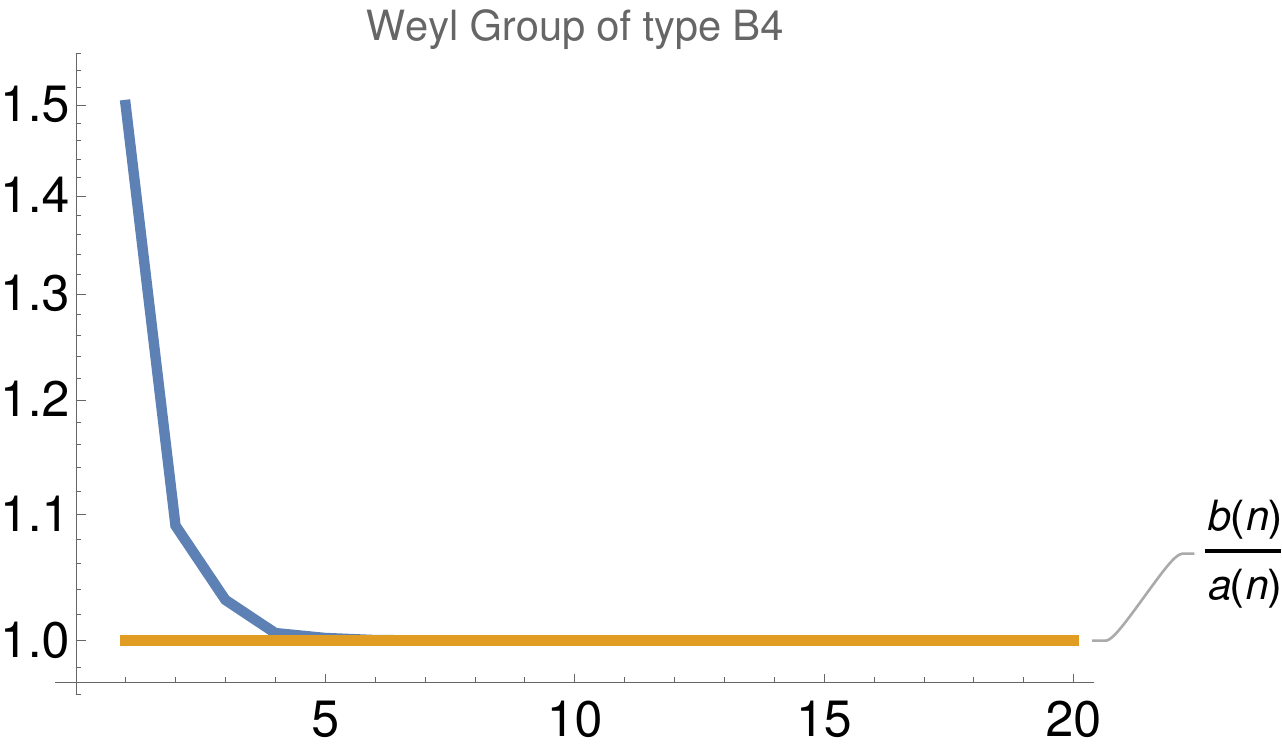}};
\end{tikzpicture}
.
\end{gather*}
Moreover, the formula $\mainsymbol(n)=\frac{2}{3}\cdot 3^{n}$ is exact for $d=1$ and $m=3$.
\end{Example}


\subsection{Fusion categories}\label{SS:ExamplesFusion}


This section discusses fusion categories over $\C$ different from $\rep(G)$.

\begin{Example}[Fibonacci category]\label{E:ExamplesFibonacci}
Let $\twocatstuff{F}$ be the \emph{Fibonacci category}, see for example 
\cite[Exercise 8.18.7]{EtGeNiOs-tensor-categories} where 
$\twocatstuff{F}$ is denoted $\mathcal{YL}_{+}$ 
(or $\mathcal{YL}_{-}$, depending on conventions). All we need to know is that $\twocatstuff{F}$ 
is $\otimes$-generated by one object $\obstuff{X}$ with action matrix
$M(\obstuff{X})=
\begin{psmallmatrix}
0 & 1
\\
1 & 1
\end{psmallmatrix}$.

We want to estimate $b^{\scalebox{0.7}{$\twocatstuff{F}$},\obstuff{X}}(n)$. To this end, 
the eigenvalues of $M(\obstuff{X})$ are the two roots of $x^{2}=x+1$, in particular, 
$\pfdim\obstuff{X}=\phi$, the golden ratio. Its Perron--Frobenius eigenvectors 
are $v=w=\scalebox{0.7}{$\big(\frac{\sqrt{5}-1}{\sqrt{10-2\sqrt{5}}},\sqrt{\frac{1}{10}(\sqrt{5}+5)}\big)^{T}$}$ and we therefore get
\begin{gather*}
\mainsymbol(n)=
\frac{1}{10}(\sqrt{5}+5)\cdot\phi^{n}=\frac{1}{\sqrt{5}}\phi^{n-1},
\quad
\begin{tikzpicture}[anchorbase]
\node at (0,0) {\includegraphics[height=2.7cm]{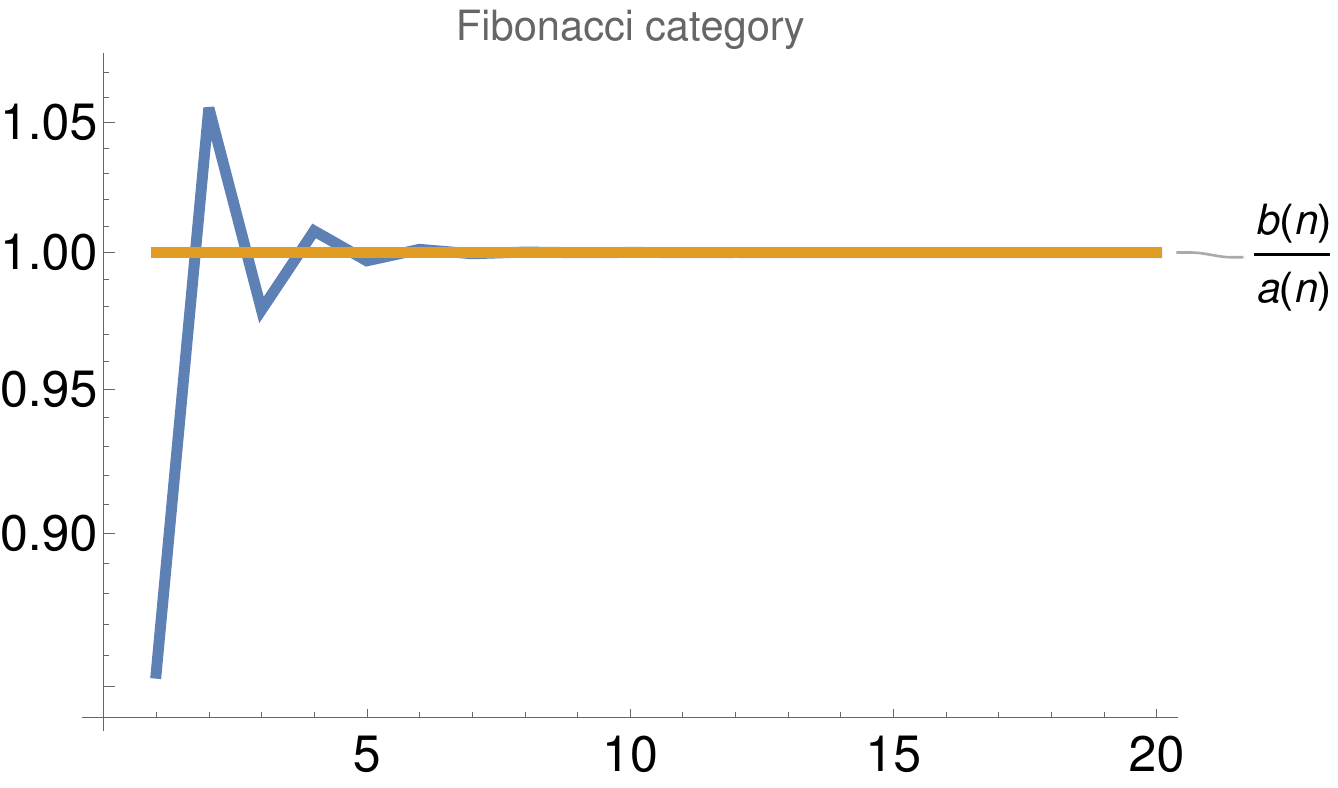}};
\end{tikzpicture}
,
\end{gather*}
from {\mt}. Note that the classical asymptotic for the Fibonacci numbers is $\frac{1}{\sqrt{5}}\phi^{n}$ and not $\frac{1}{\sqrt{5}}\phi^{n-1}$, but 
$b^{\scalebox{0.7}{$\twocatstuff{F}$},\obstuff{X}}(n)$ is equal to the $(n+1)$th 
Fibonacci number and hence the off-by-one-error in the exponent.
\end{Example}

\begin{Example}[Verlinde category]\label{E:ExamplesVerlinde}
We now consider the \emph{Verlinde category} $\verlinde$ for $k\in\Z_{\geq 2}$, see for example \cite[Section 8.18.2]{EtGeNiOs-tensor-categories} (denoted differently therein). This fusion category has $k$ simple objects, and we 
take the generating object $\obstuff{X}$ of categorical dimension 
$2\cos\big(\pi/(k+1)\big)$. The case $k=2$ compares to super vector spaces.

The action matrix for $\obstuff{X}$ has the type A Dynkin diagram as its associated graph, and the eigenvalues and eigenvectors of this graph are well-known, see 
for example \cite{Sm-ADE}. In particular, $\pfdim\obstuff{X}=2\cos\big(\pi/(k+1)\big)$.
Let $q=\exp\big(\pi i/(k+1)\big)$.
Then {\mt} gives us
\begin{gather*}
\mainsymbol(n)=
\begin{cases}
\frac{[1]_{q}+\dots+[k]_{q}}{[1]_{q}^{2}+\dots+[k]_{q}^{2}}\cdot\big(2\cos(\pi/(k+1))\big)^{n} & \text{if $k$ is even},
\\
\Big(\frac{[1]_{q}+\dots+[k]_{q}}{[1]_{q}^{2}+\dots+[k]_{q}^{2}}\cdot 1 + \frac{[1]_{q}-[2]_{q}+\dots-[k-1]_{q}+[k]_{q}}{[1]_{q}^{2}+\dots+[k]_{q}^{2}}\cdot (-1)^n\Big)\cdot\big(2\cos(\pi/(k+1))\big)^{n} & \text{if $k$ is odd}.
\end{cases}
\end{gather*}
Here $[a]_{q}$ denotes the $a$th quantum number evaluated at $q$. We get, for example:
\begin{gather*}
k=4\colon
\begin{tikzpicture}[anchorbase]
\node at (0,0) {\includegraphics[height=2.7cm]{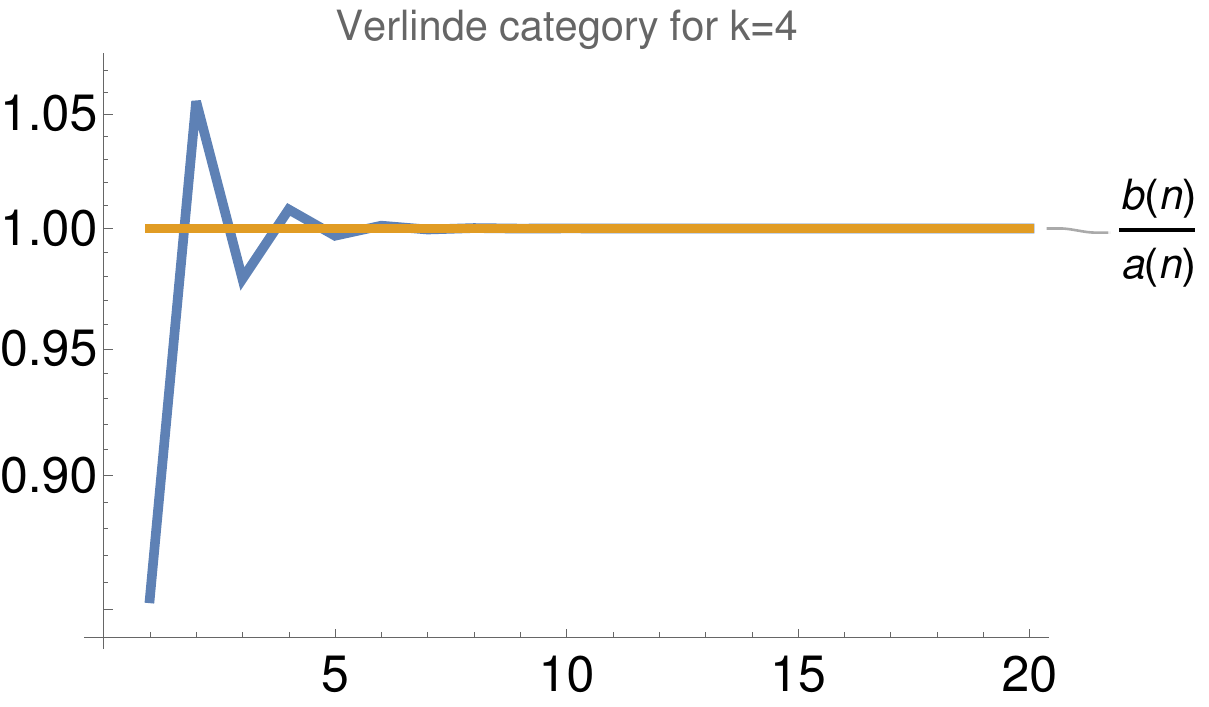}};
\end{tikzpicture}
,\quad
k=6\colon
\begin{tikzpicture}[anchorbase]
\node at (0,0) {\includegraphics[height=2.7cm]{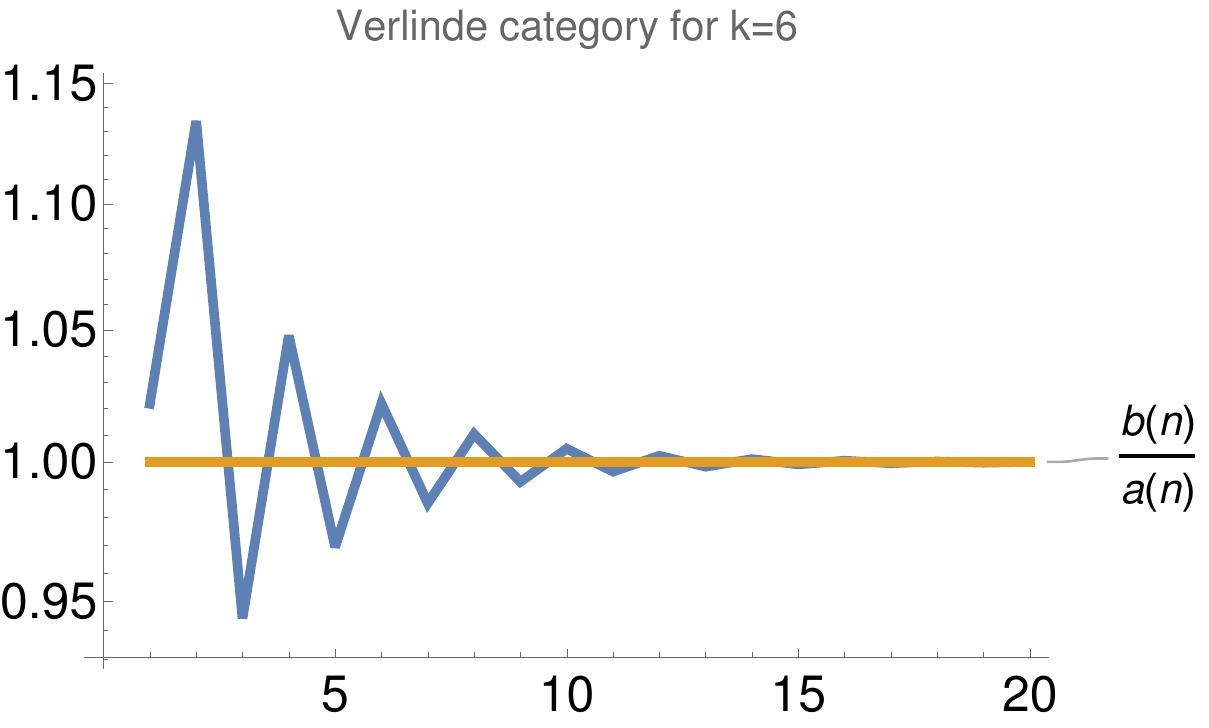}};
\end{tikzpicture}
,\\
k=7\colon
\begin{tikzpicture}[anchorbase]
\node at (0,0) {\includegraphics[height=2.7cm]{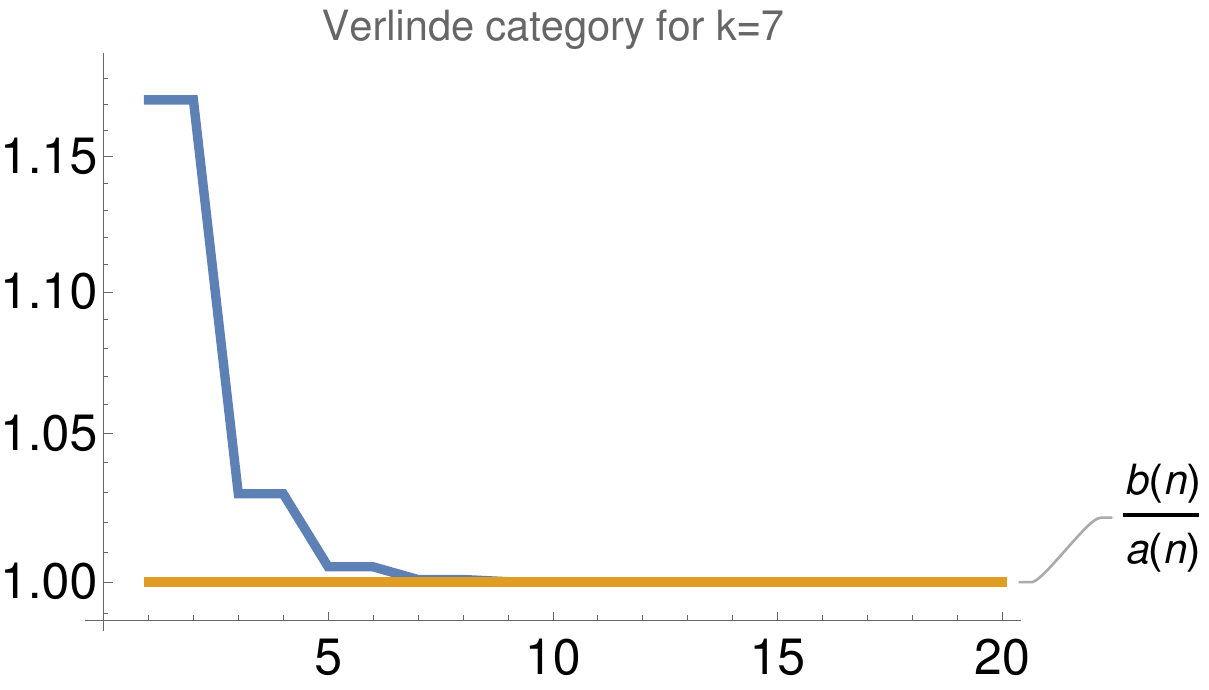}};
\end{tikzpicture}
,\quad
k=9\colon
\begin{tikzpicture}[anchorbase]
\node at (0,0) {\includegraphics[height=2.7cm]{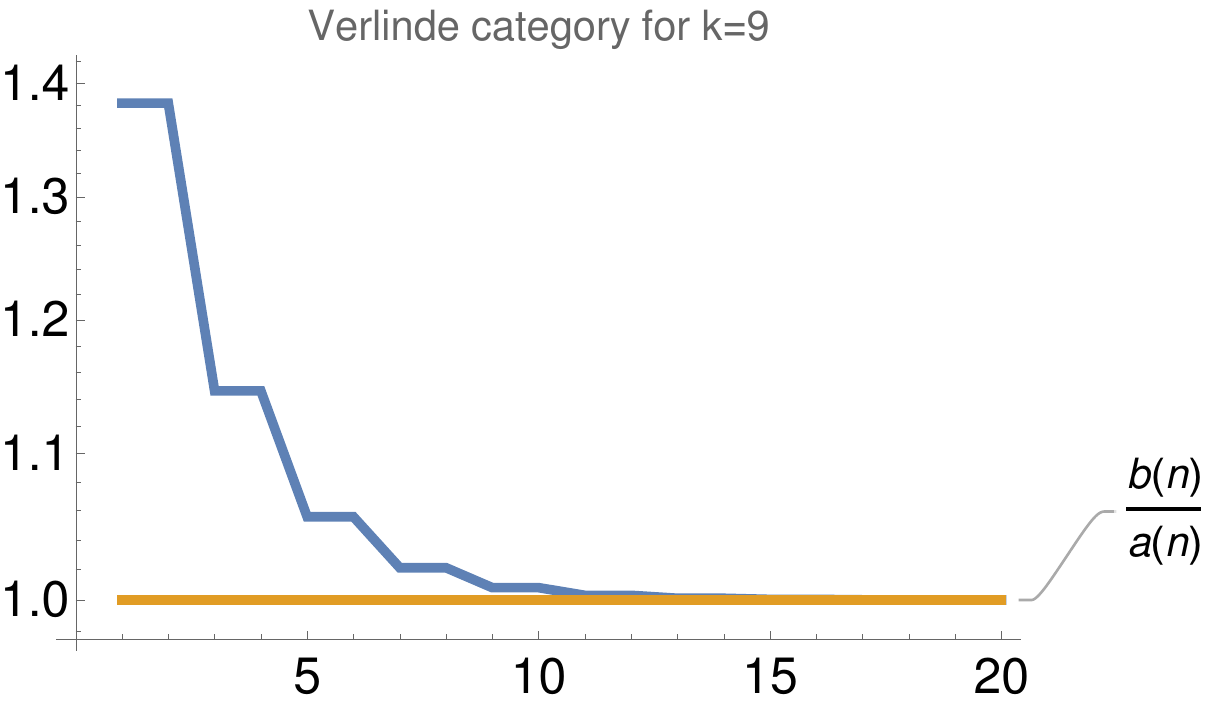}};
\end{tikzpicture}
.
\end{gather*}
Moreover, for $k\in\{3,5\}$ the formula $\mainsymbol(n)$ is spot on.
\end{Example}

\begin{Example}[Higher rank Verlinde categories]\label{E:ExamplesVerlindeTwo}
Verlinde categories can be defined for all simple Lie algebras as 
quotients of representations of quantum groups at a root of unity 
as explained in \cite{AnPa-fusion-lie-algebras}.
Let us focus in this example on $\verlindeg$, the one for the special 
linear group of rank three (with $k$ determined as $e$ in \cite[Section 2]{MaMaMiTu-trihedral}).

For $\verlindeg$ we take the generating object $\obstuff{X}$ corresponding to
the vector representation of $\mathrm{SL}_{3}(\C)$. Its action matrix 
is the oriented version of the graph displayed in \cite[Fig. A1]{MaMaMiTu-trihedral} with the orientation as in \cite[(3-1)]{MaMaMiTu-trihedral}. Using this, and omitting $k=1$ since this is trivial, {\mt} gives
\begin{gather*}
\scalebox{0.97}{$k=2\colon
\mainsymbol(n)=\frac{1}{10}(\sqrt{5}+5)\cdot\phi^{k}
,\quad
k=3\colon
\mainsymbol(n)=
1/2\cdot 2^{n}
,\quad
k=4\colon
\mainsymbol(n)=
\frac{1}{7}\Big(2+2\cos\big(\frac{3\pi}{7}\big)\Big)\cdot\Big(1+2\cos\big(\frac{2\pi}{7}\big)\Big)^{n}$}
,
\\	
k=2\colon
\begin{tikzpicture}[anchorbase]
\node at (0,0) {\includegraphics[height=2.7cm]{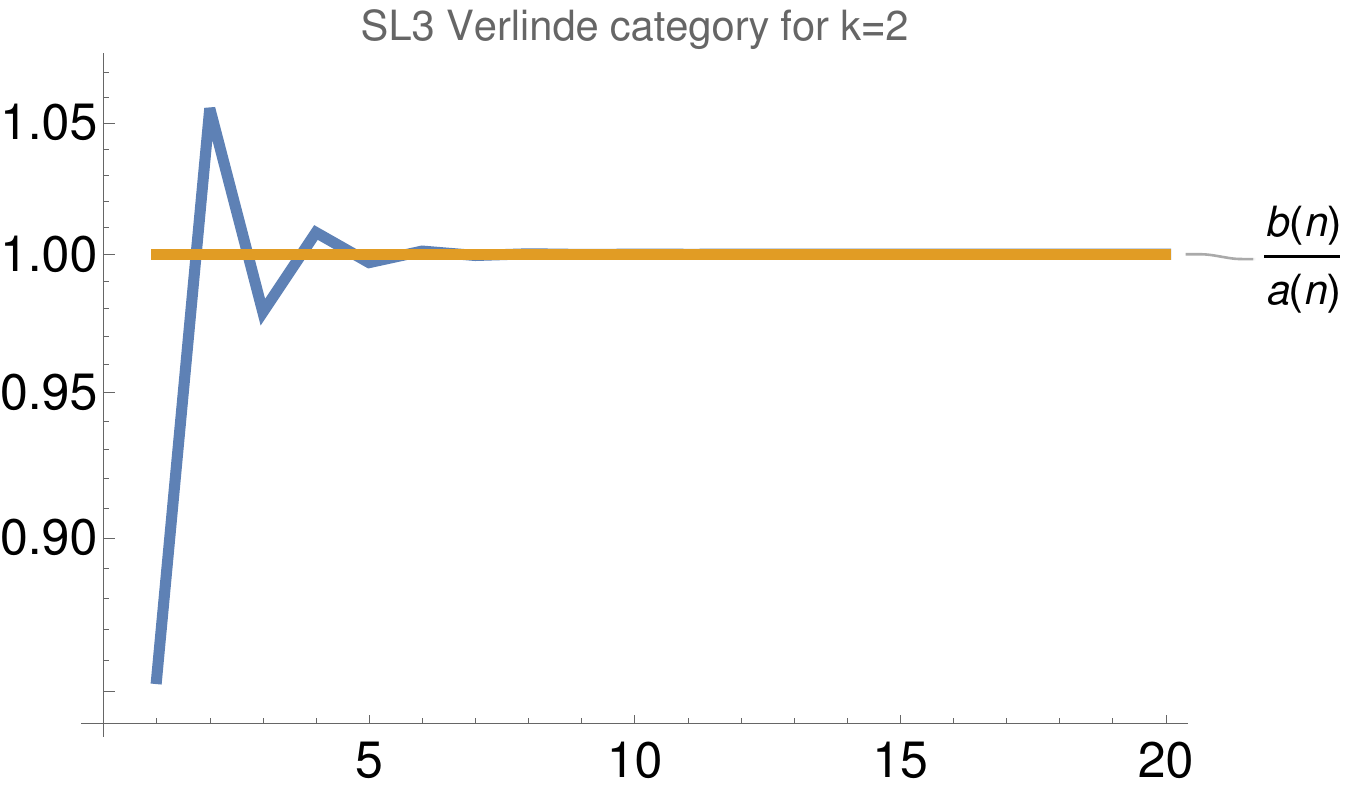}};
\end{tikzpicture}
,\quad
k=4\colon
\begin{tikzpicture}[anchorbase]
\node at (0,0) {\includegraphics[height=2.7cm]{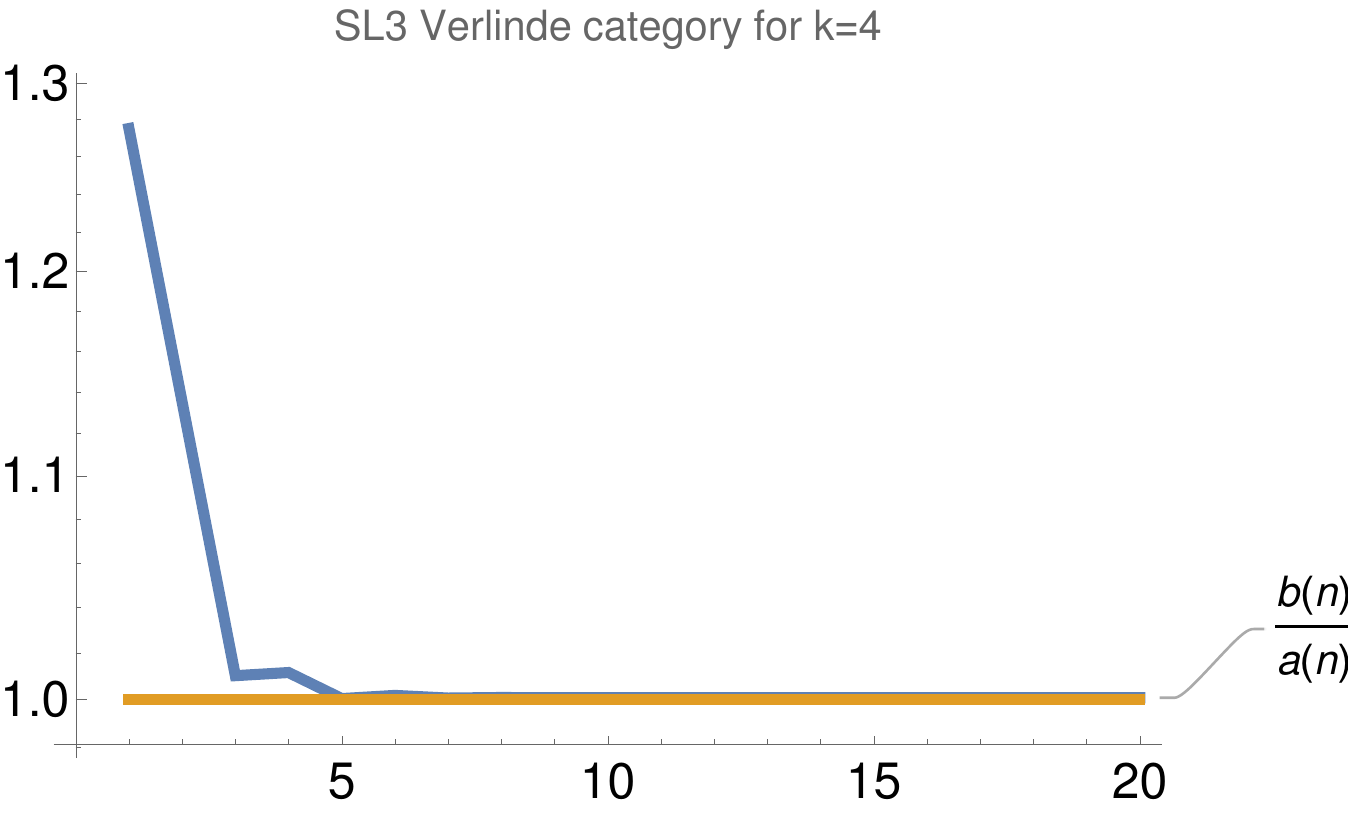}};
\end{tikzpicture}
.
\end{gather*}
For $k=3$ the displayed formulas are exact.
Moreover, one can find $\mainsymbol(n)$ explicitly in general using the formulas 
in \cite{Zu-gen-dynkin-diagrams} or \cite[Section 2]{MaMaMiTu-trihedral}.
\end{Example}


\subsection{Nonsemisimple examples}\label{SS:ExamplesNonsemisimple}


We now discuss two nonsemisimple examples.

\begin{Example}[$\sltwo$ in defining characteristic]\label{E:ExamplesSLTwo}
Let our ground field be $\F$ for some prime $p>2$. We consider 
the finite group $\sltwo$ and its representations over $\F$.
Take $V=\F^{2}$ to be the vector representation of 
$\sltwo$. In this case the action matrices are exemplified by
\begin{gather*}
p=3\colon
\begin{psmallmatrix}
0 & 1 & 0 & 0 & 0 \\
1 & 0 & 0 & 0 & 0 \\
0 & 1 & 0 & 3 & 0 \\
0 & 0 & 1 & 0 & 1 \\
0 & 0 & 0 & 1 & 0 \\
\end{psmallmatrix}
,\quad
p=5\colon
\begin{psmallmatrix}
0 & 1 & 0 & 0 & 0 & 0 & 0 & 0 & 0 \\
1 & 0 & 1 & 0 & 0 & 0 & 0 & 0 & 0 \\
0 & 1 & 0 & 1 & 0 & 0 & 0 & 0 & 0 \\
0 & 0 & 1 & 0 & 0 & 0 & 0 & 0 & 0 \\
0 & 0 & 0 & 1 & 0 & 2 & 0 & 1 & 0 \\
0 & 0 & 0 & 0 & 1 & 0 & 1 & 0 & 0 \\
0 & 0 & 0 & 0 & 0 & 1 & 0 & 1 & 0 \\
0 & 0 & 0 & 0 & 0 & 0 & 1 & 0 & 1 \\
0 & 0 & 0 & 0 & 0 & 0 & 0 & 1 & 0 \\
\end{psmallmatrix}
,\quad
p=7\colon
\begin{psmallmatrix}
0 & 1 & 0 & 0 & 0 & 0 & 0 & 0 & 0 & 0 & 0 & 0 & 0 \\
1 & 0 & 1 & 0 & 0 & 0 & 0 & 0 & 0 & 0 & 0 & 0 & 0 \\
0 & 1 & 0 & 1 & 0 & 0 & 0 & 0 & 0 & 0 & 0 & 0 & 0 \\
0 & 0 & 1 & 0 & 1 & 0 & 0 & 0 & 0 & 0 & 0 & 0 & 0 \\
0 & 0 & 0 & 1 & 0 & 1 & 0 & 0 & 0 & 0 & 0 & 0 & 0 \\
0 & 0 & 0 & 0 & 1 & 0 & 0 & 0 & 0 & 0 & 0 & 0 & 0 \\
0 & 0 & 0 & 0 & 0 & 1 & 0 & 2 & 0 & 0 & 0 & 1 & 0 \\
0 & 0 & 0 & 0 & 0 & 0 & 1 & 0 & 1 & 0 & 0 & 0 & 0 \\
0 & 0 & 0 & 0 & 0 & 0 & 0 & 1 & 0 & 1 & 0 & 0 & 0 \\
0 & 0 & 0 & 0 & 0 & 0 & 0 & 0 & 1 & 0 & 1 & 0 & 0 \\
0 & 0 & 0 & 0 & 0 & 0 & 0 & 0 & 0 & 1 & 0 & 1 & 0 \\
0 & 0 & 0 & 0 & 0 & 0 & 0 & 0 & 0 & 0 & 1 & 0 & 1 \\
0 & 0 & 0 & 0 & 0 & 0 & 0 & 0 & 0 & 0 & 0 & 1 & 0 \\
\end{psmallmatrix}
.
\end{gather*}
These can be described as follows. 
The matrix is the one obtained as a $(2p-1)$-by-$(2p-1)$ cut-off of the matrix 
for the infinite group over $\overline{\F}$ that can be obtained from \cite[Proposition 4.4]{SuTuWeZh-mixed-tilting}, together with an extra entry $1$ in position $(p,2p-2)$.

Then {\mt} gives
\begin{gather*}
\mainsymbol(n)=\left(\frac{1}{2p-2}\cdot 1+\frac{1}{2p^{2}-2p}\cdot(-1)^{n}\right)\cdot 2^{n}.
\end{gather*}
Explicitly, for $p\in\{3,5\}$ we get
\begin{gather*}
p=3\colon
\begin{tikzpicture}[anchorbase]
\node at (0,0) {\includegraphics[height=2.7cm]{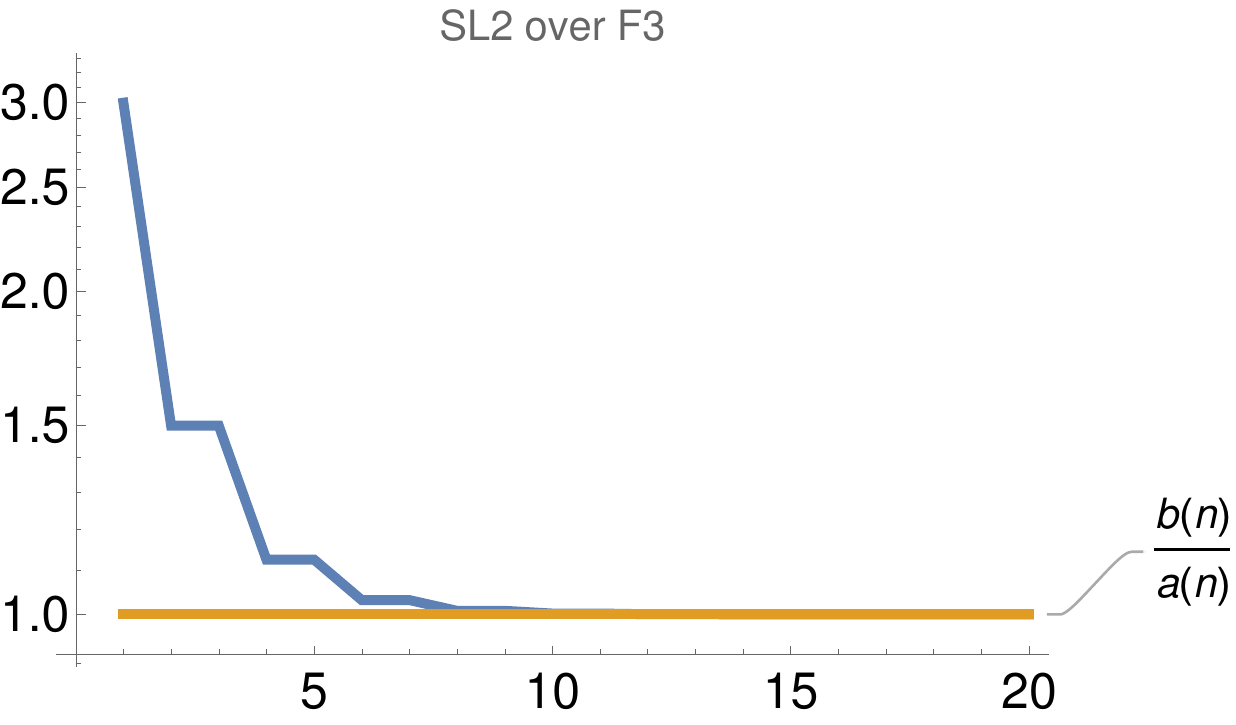}};
\end{tikzpicture}
,\quad
p=5\colon
\begin{tikzpicture}[anchorbase]
\node at (0,0) {\includegraphics[height=2.7cm]{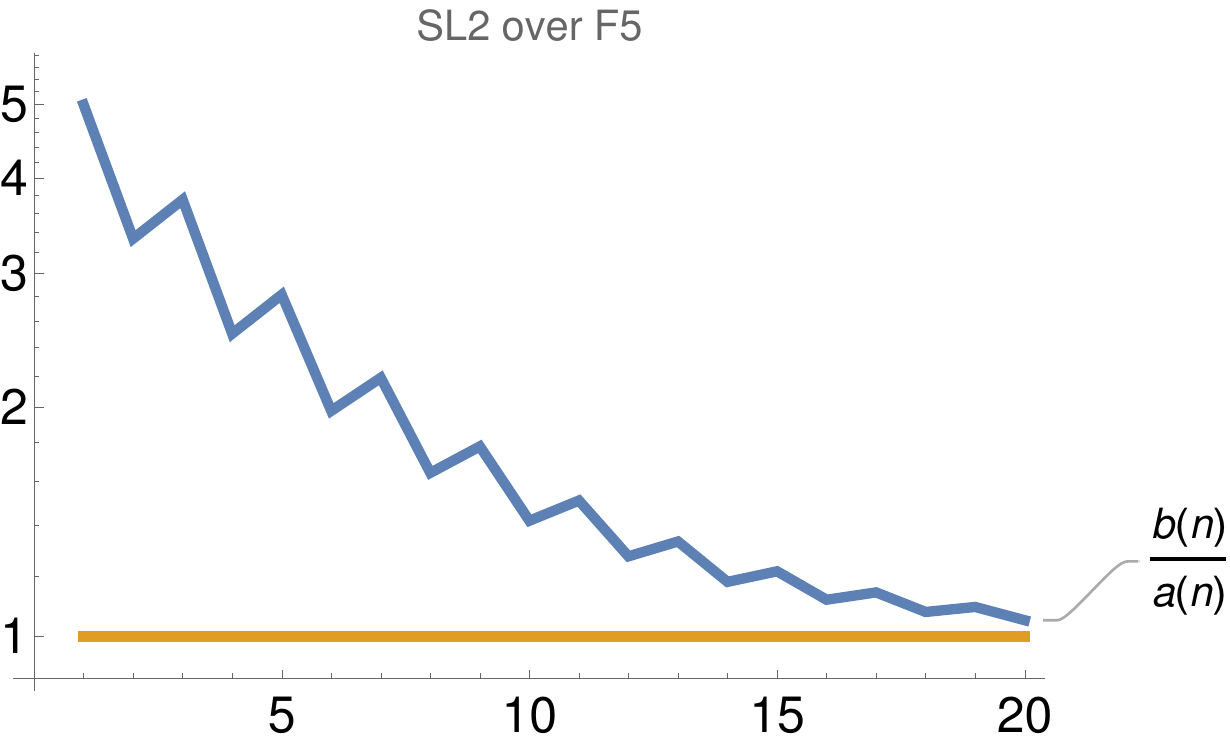}};
\end{tikzpicture}
.
\end{gather*}
The convergence is rather slow (but still geometric).
\end{Example}

\begin{Example}[Dihedral Soergel bimodules]\label{E:ExamplesSBim}
We now look at the 
category of dihedral Soergel bimodules 
as studied in details in, for example, \cite{El-two-color-soergel}, 
\cite{MaTu-soergel} or \cite{Tu-sandwich-cellular}.
In particular, \cite[Section 3C]{Tu-sandwich-cellular} lists all the formulas 
relevant for {\mt}. To get a finite based $\Rplus$-algebra we collapse the grading, meaning 
we specialize \cite[Section 3C]{Tu-sandwich-cellular} at $q=1$.

Fix $\langle s,t|s^{2}=t^{2}=(st)^{m}\rangle$ as the presentation 
for the dihedral group of order $2m$ where $m\in\Z_{\geq 3}$.
Let us take $\obstuff{X}$ to be the Bott--Samelson generator for $st$. 
By the explicit 
formulas in \cite[Section 3C]{Tu-sandwich-cellular},
the action graph of $\obstuff{X}$ is almost the same as the action graph 
of tensoring with $\C^{3}$ as a $\mathrm{SO}_{3}(\C)$-representation. 
The first ones are (read from left to right):
\begin{center}
\begin{tabular}{c||c}
$m\in\{3,5,7\}$	& $m\in\{4,6,8\}$ \\
	\hline
\begin{tikzpicture}[anchorbase]
	\node at (0,0) {\includegraphics[height=1.3cm]{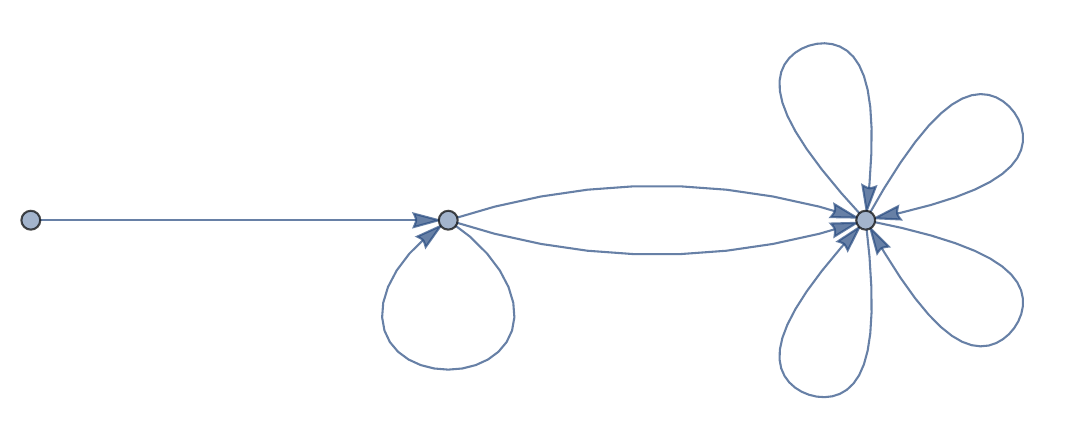}};
\end{tikzpicture}	&  \begin{tikzpicture}[anchorbase]
\node at (0,0) {\includegraphics[height=1.3cm]{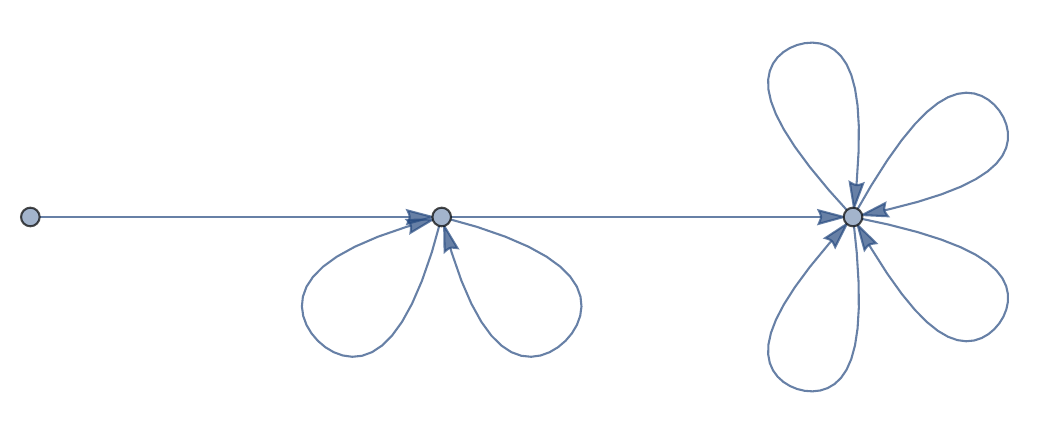}};
\end{tikzpicture} \\
	\hline
\begin{tikzpicture}[anchorbase]
	\node at (0,0) {\includegraphics[height=1.3cm]{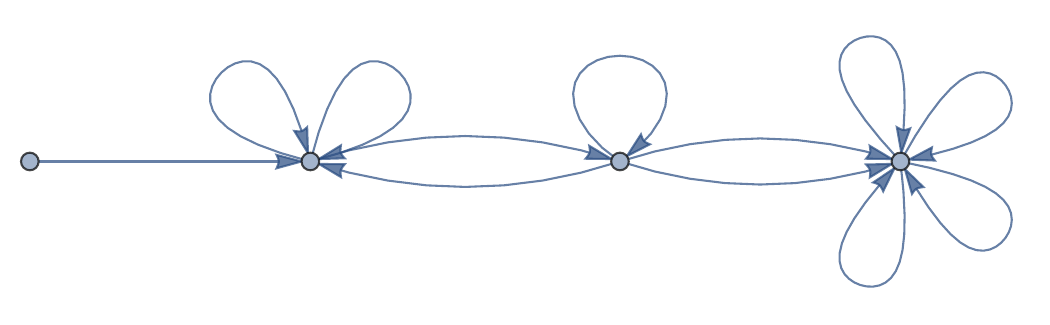}};
\end{tikzpicture}	&  \begin{tikzpicture}[anchorbase]
\node at (0,0) {\includegraphics[height=1.3cm]{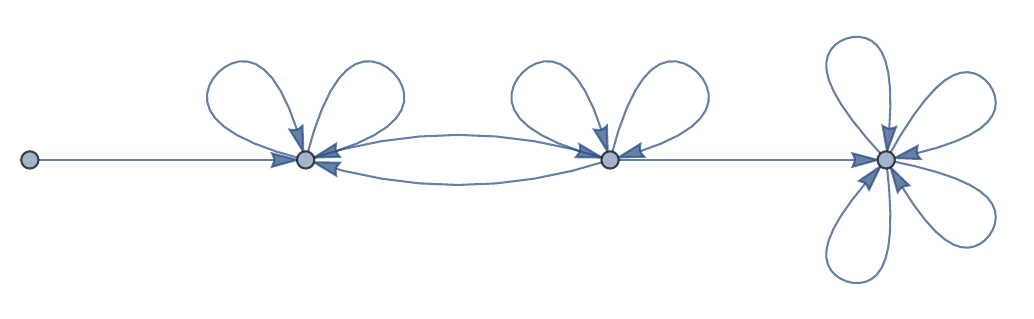}};
\end{tikzpicture} \\
	\hline
\begin{tikzpicture}[anchorbase]
	\node at (0,0) {\includegraphics[height=1.3cm]{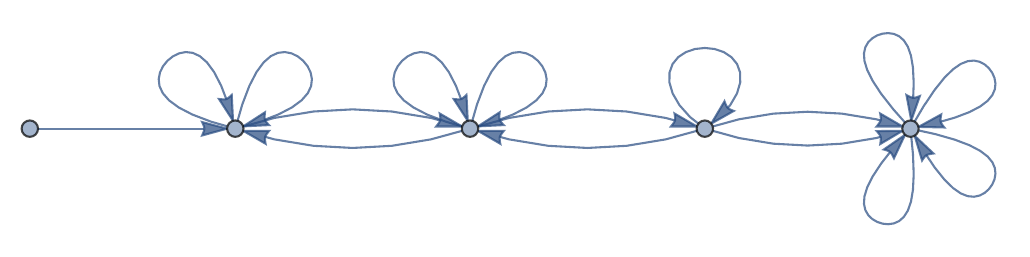}};
\end{tikzpicture}	&  \begin{tikzpicture}[anchorbase]
\node at (0,0) {\includegraphics[height=1.3cm]{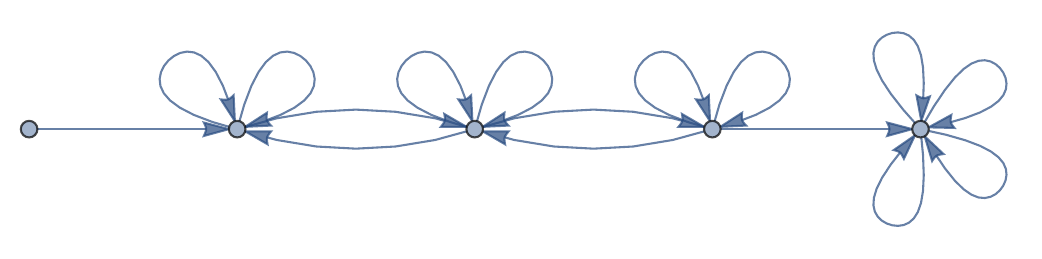}};
\end{tikzpicture} \\
\end{tabular}
\end{center}
The pattern generalizes. It is then easy to show that 
the leading eigenvalues is always $4$ and 
the absolute values of all other eigenvalues 
are strictly smaller. Moreover, {\mt} gives: 
\begin{gather*}
\mainsymbol(n)=\frac{1}{2m}\cdot 4^{n},
\\
m=3\colon
\begin{tikzpicture}[anchorbase]
\node at (0,0) {\includegraphics[height=2.7cm]{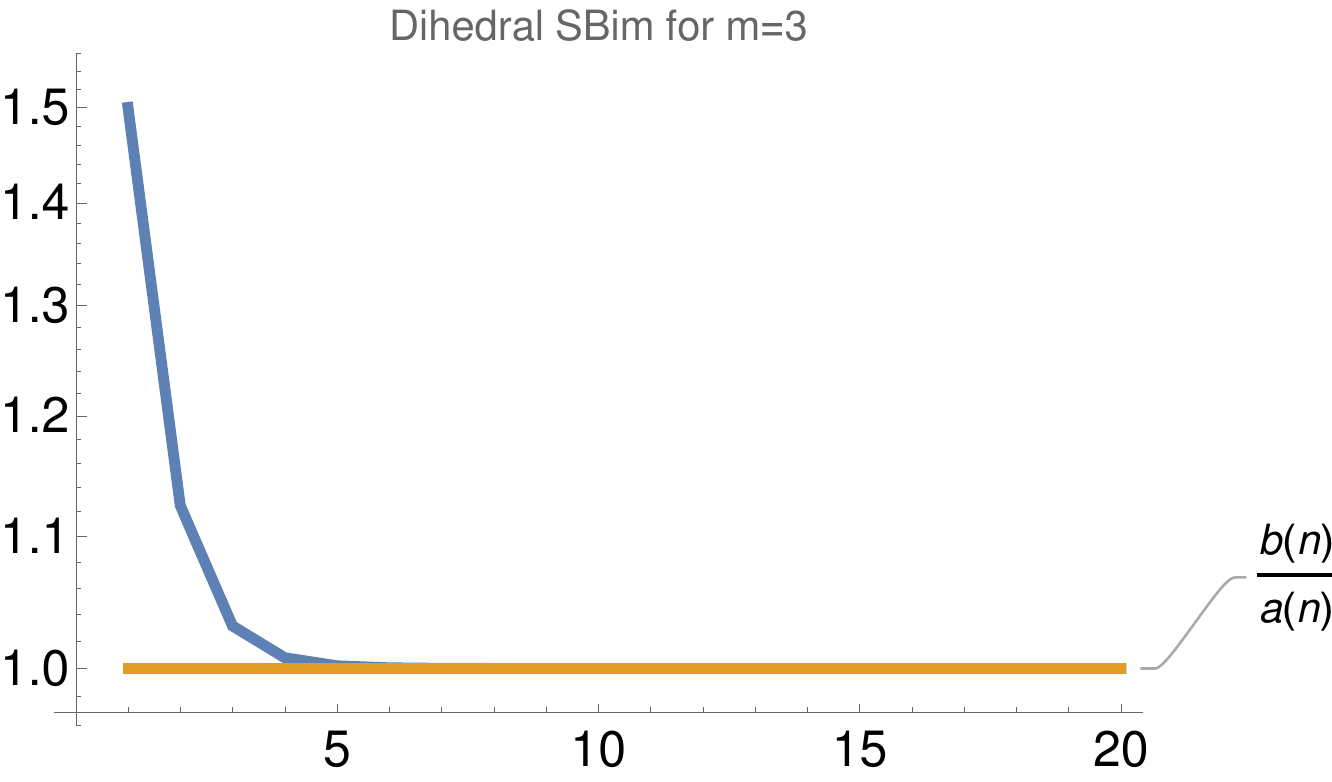}};
\end{tikzpicture}
,\quad
m=7\colon
\begin{tikzpicture}[anchorbase]
\node at (0,0) {\includegraphics[height=2.7cm]{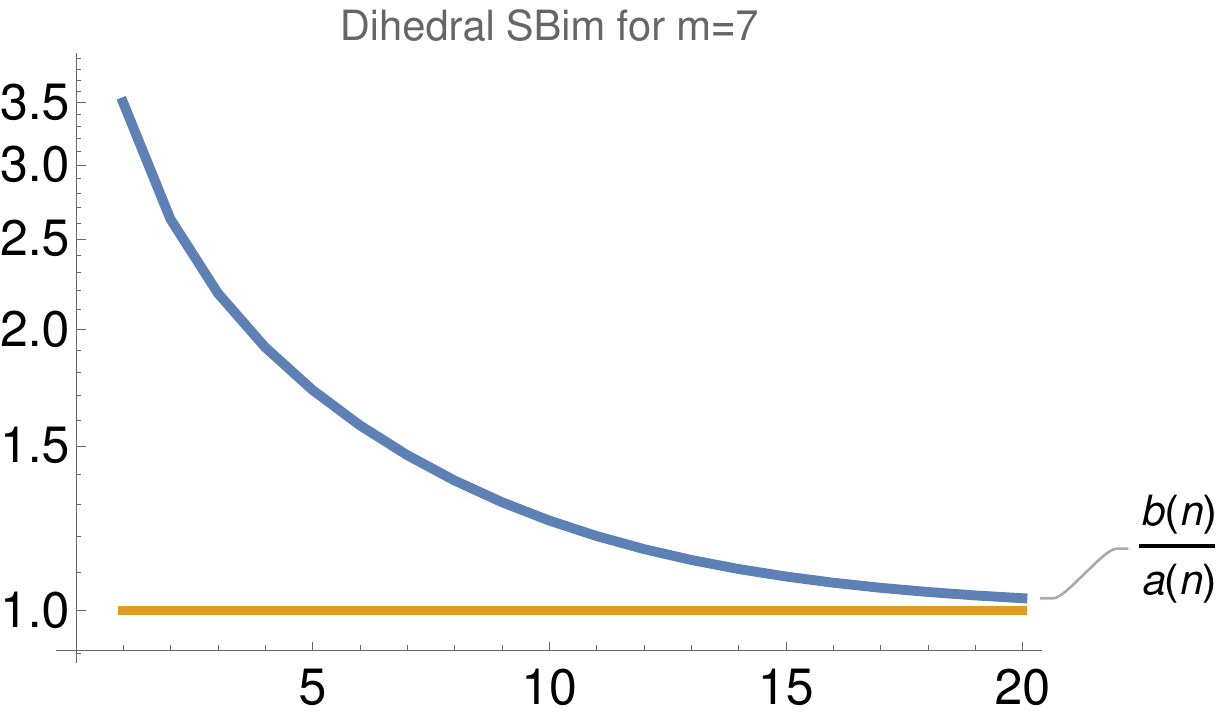}};
\end{tikzpicture}
.
\end{gather*}
The rate of convergence is rather slow for $m\gg 0$.
\end{Example}


\section{Generalizations and proofs}\label{S:Proof}


We will prove several versions of \autoref{T:IntroMain}.


\subsection{Perron--Frobenius theory}\label{SS:ProofPF}


We start with the main player, the \emph{Perron--Frobenius theorem}. 
To this end, recall that one can associate 
an oriented and weighted graph, its \emph{adjacency graph}, to an $m$-by-$m$ matrix 
$M=(m_{ij})_{1\leq i,j\leq m}\in\rmat$ as follows:
\begin{enumerate}[label=(\roman*)]

\item The vertices are $\{1,\dots,m\}$.

\item There is an edge with weight $m_{ij}$ from $i$ to $j$.

\end{enumerate}
We call a nonzero matrix 
$M\in\rmat$ \emph{irreducible} if its associated graph
is connected in the oriented sense (this is called \emph{strongly connected}).
Recall that in this note a right eigenvector satisfies $Mv=\lambda\cdot v$, 
and a left eigenvector satisfies $w^{T}M=\lambda\cdot w^{T}$.

\begin{Theorem}[Perron--Frobenius theorem part I]\label{T:ProofPFTheorem}
Let $M\in\rmat$ be irreducible.
\begin{enumerate}

\item $M$ has a \emph{Perron--Frobenius eigenvalue}, that is, 
$\lambda\in\R_{>0}$ such that $\lambda\geq|\mu|$ for all other eigenvalues $\mu$.
This eigenvalue appears with multiplicity one, and all other eigenvalues 
with $\lambda=|\mu|$ also appear with multiplicity one.

\item There exists $h\in\Z_{\geq 1}$, the \emph{period}, such that all eigenvalues 
$\mu$ with $\lambda=|\mu|$ are $\exp(k2\pi i/h)\lambda$ for $k\in\{0,\dots,h-1\}$.
We call these \emph{pseudo-dominant eigenvalues}.

\item The eigenvectors, left and right, for the Perron--Frobenius eigenvalue can be normalized to have entries in $\Rplus$.

\end{enumerate}
\end{Theorem}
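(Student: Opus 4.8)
The plan is to establish (a) first --- existence of the dominant eigenvalue together with its simplicity --- then read off (c) along the way, and finally extract the cyclic structure of (b). The one elementary fact underpinning everything is that irreducibility of a nonzero $M=(m_{ij})\in\rmat$ (assume $m\geq 2$; the case $m=1$ is immediate) is equivalent to $(\mathrm{Id}+M)^{m-1}$ having all entries in $\R_{>0}$: strong connectivity of the adjacency graph means that between any two vertices there is an oriented walk of length $<m$, and such walks are precisely what nonzero entries of the powers $M^{0},\dots,M^{m-1}$ record. In particular $M$ has no zero row or column, so $x\mapsto Mx/\|Mx\|_{1}$ is a well-defined continuous self-map of the standard simplex $\Delta=\{x\in\Rplus^{m}\mid\sum_{i}x_{i}=1\}$.

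\textbf{Parts (a) and (c).} I would apply Brouwer's fixed point theorem to this self-map of $\Delta$ to get a fixed point $v$, so $Mv=\lambda v$ with $\lambda=\|Mv\|_{1}\in\R_{>0}$ and $v\in\Delta$; applying $(\mathrm{Id}+M)^{m-1}$ to $v$ then forces $v\in\R_{>0}^{m}$, and the same argument for $M^{T}$ gives a strictly positive left eigenvector $y^{T}M=\lambda y^{T}$ --- which already yields (c). To see $\lambda$ dominates: if $Mz=\mu z$ with $z\neq 0$ then entrywise $|\mu|\,|z|=|Mz|\leq M|z|$, so pairing with $y^{T}>0$ gives $|\mu|\,y^{T}|z|\leq y^{T}M|z|=\lambda\,y^{T}|z|$ with $y^{T}|z|>0$, whence $|\mu|\leq\lambda$; and if $|\mu|=\lambda$ then $y^{T}$ applied to the nonnegative vector $M|z|-\lambda|z|$ vanishes, so $M|z|=\lambda|z|$, i.e. $|z|$ is a nonnegative --- hence by the positivity trick strictly positive --- $\lambda$-eigenvector. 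Geometric simplicity of $\lambda$: a real $\lambda$-eigenvector $u$ not proportional to $v$ would make $v-tu$ a nonnegative $\lambda$-eigenvector with a zero entry for suitable $t\in\R$, contradicting strict positivity unless $v-tu=0$. Algebraic simplicity: a generalized eigenvector $w$ with $(M-\lambda)w=v$ gives the contradiction $0=y^{T}(M-\lambda)w=y^{T}v>0$.

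\textbf{Part (b).} Fix $\mu$ with $|\mu|=\lambda$ and eigenvector $z$; by the above $|z|$ is a positive multiple of $v$, so $z_{j}=v_{j}e^{i\phi_{j}}$. Dividing $\sum_{k}m_{jk}z_{k}=\mu z_{j}$ by $v_{j}e^{i\phi_{j}}$ expresses $\mu=\lambda e^{i\theta}$ (here $\theta=\arg\mu$) as $\sum_{k}(m_{jk}v_{k}/v_{j})e^{i(\phi_{k}-\phi_{j})}$, a combination of unit complex numbers with nonnegative weights of total mass $\lambda$; tightness of the triangle inequality forces $e^{i(\phi_{k}-\phi_{j})}=e^{i\theta}$ whenever $m_{jk}>0$. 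With $D=\mathrm{diag}(e^{i\phi_{1}},\dots,e^{i\phi_{m}})$ this says exactly $D^{-1}MD=e^{i\theta}M=\omega M$ where $\omega=\mu/\lambda$. Since each such similarity makes $\mathrm{spec}(M)$ invariant under multiplication by $\mu/\lambda$, the ratios over all pseudo-dominant $\mu$ form a finite --- hence cyclic --- subgroup of the unit circle, i.e. $\{\exp(2\pi ik/h)\mid k=0,\dots,h-1\}$ for some $h$, and the pseudo-dominant eigenvalues are exactly $\exp(2\pi ik/h)\lambda$. Finally, from $MDx=\omega DMx$ one sees that $Dx$ is a $\mu$-eigenvector of $M$ whenever $Mx=\lambda x$, so the $\mu$-eigenspace is the diagonal rescaling $D\cdot\C v$ of the one-dimensional $\lambda$-eigenspace, and algebraic simplicity transports the same way; so each pseudo-dominant eigenvalue is simple.

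\textbf{Main obstacle.} The genuinely delicate step is part (b): squeezing out of the tight triangle inequality the diagonal similarity $D^{-1}MD=\omega M$, and then checking that the pseudo-dominant eigenvalues form a complete coset orbit with each member simple. Part (a) is routine once Brouwer's theorem is invoked (or, alternatively, the variational formula $\lambda=\max_{x\in\Delta}\min_{i:x_{i}>0}(Mx)_{i}/x_{i}$ is used in its place), after which the positivity trick $(\mathrm{Id}+M)^{m-1}>0$ does all remaining work.
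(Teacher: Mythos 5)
The paper does not actually prove this theorem: its ``proof'' is a one-line citation to Frobenius' collected works, so there is no argument in the paper to compare against step by step. Your proposal, by contrast, is a genuine self-contained proof, and it is correct. It follows one of the standard modern routes: the positivity trick that irreducibility is equivalent to $(\mathrm{Id}+M)^{m-1}$ having all strictly positive entries; Brouwer's fixed-point theorem on the simplex to produce a positive right (and, via $M^{T}$, left) eigenvector; the pairing $y^{T}(\,\cdot\,)$ against the positive left eigenvector to get $\lambda\geq|\mu|$, then geometric simplicity by sliding $v-tu$ to a zero entry and algebraic simplicity by $y^{T}(M-\lambda)w=y^{T}v>0$; and for the period, Wielandt's tight-triangle-inequality argument yielding the diagonal similarity $D^{-1}MD=\omega M$, from which the set of unit-modulus ratios $\mu/\lambda$ is a finite (hence cyclic) subgroup of the circle and each pseudo-dominant eigenvalue inherits the simplicity of $\lambda$ under conjugation by $D$. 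A couple of steps are stated tersely but are routine to complete --- choosing $t^{*}=\min_{j:u_{j}>0}v_{j}/u_{j}$ so that $v-t^{*}u\geq 0$ has a vanishing entry, and checking closure of the ratio set under multiplication (via $\omega\,\mathrm{spec}(M)=\mathrm{spec}(M)$) and inversion (via $\overline{\mathrm{spec}(M)}=\mathrm{spec}(M)$) --- so I would not call these gaps. In short: where the paper defers to the literature, you have supplied a complete and correct proof by a standard method.
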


\begin{proof}
Well-known. See for example, Frobenius' paper 92 in Band 3 of
\cite{Fr-werke}. (This is the paper ``{\"U}ber Matrizen aus nicht negativen Elementen''.)
\end{proof}

Fix a function $f\colon\N\to\N$. We say that $f(n)$ converges \emph{geometrically 
to $a\in\R$ with ratio $\beta\in[0,1)$} if for all $\gamma\in(\beta,1)$ we have that $\{(f(n)-a)/\gamma^{n}\}_{n\in\N}$ is bounded. We, abusing language, will call the infimum of all ratios \emph{the} ratio of convergence.

For two matrices of the same size let $\sim$ mean that they are asymptotically equal entrywise 
(note that below $v_{i}w_{i}^{T}$ are matrices). For such matrices we apply 
the definition of geometric convergence entrywise with the ratio being the maximum of the entrywise ratios.
The following accompanies \autoref{T:ProofPFTheorem}:

\begin{Theorem}[Perron--Frobenius theorem part II]\label{T:ProofPFTheoremTwo}
Let $M\in\rmat$ be irreducible, $\lambda$ be its Perron--Frobenius eigenvalue and $h$ be its period. Let $\zeta=\exp(2\pi i/h)$.
For each $k\in\{0,\dots,h-1\}$, choose a left eigenvector $v_{k}$ and a right eigenvector $w_{k}$ with eigenvalue $\zeta^{k}\lambda$, normalized such that $w_{k}^{T}v_{k}=1$.

Then we have:
\begin{gather*}
M^{n}\sim v_{0}w_{0}^{T}\cdot\lambda^{n}
+v_{1}w_{1}^{T}\cdot(\zeta\lambda)^{n}
+v_{2}w_{2}^{T}\cdot(\zeta^2\lambda)^{n}+\dots
+v_{h-1}w_{h-1}^{T}\cdot(\zeta^{h-1}\lambda)^{n}.
\end{gather*}
Moreover, the convergence is geometric with ratio $|\lambda^{sec}/\lambda|$, where 
$\lambda^{sec}$ is any second largest (in the sense of absolute value) eigenvalue.
\end{Theorem}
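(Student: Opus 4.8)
The plan is to pass to the Jordan (spectral) decomposition of $M$ over $\C$ and read off the growth of $M^n$ eigenvalue by eigenvalue. Let $\mu_1,\dots,\mu_s$ be the distinct complex eigenvalues of $M$ and write $M=\sum_{j=1}^{s}(\mu_jP_j+N_j)$ for the associated decomposition: the $P_j$ are the spectral idempotents ($P_jP_l=\delta_{jl}P_j$ and $\sum_jP_j=I$), each $N_j=(M-\mu_jI)P_j$ is nilpotent, and $P_j$ commutes with $N_j$. Expanding binomially and using nilpotency to truncate the sum gives
\begin{gather*}
M^n=\sum_{j=1}^{s}(\mu_jP_j+N_j)^n=\sum_{j=1}^{s}\Big(\mu_j^{\,n}P_j+\tbinom{n}{1}\mu_j^{\,n-1}N_j+\tbinom{n}{2}\mu_j^{\,n-2}N_j^{2}+\dots\Big).
\end{gather*}
First I would isolate the indices $j$ with $|\mu_j|=\lambda$ and treat the rest as an error term.

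For the dominant part I invoke \autoref{T:ProofPFTheorem}: the eigenvalues of absolute value $\lambda$ are precisely $\lambda_k:=\zeta^k\lambda$ for $k\in\{0,\dots,h-1\}$, each of algebraic multiplicity one. Multiplicity one means the corresponding generalized eigenspace is a single line $\C v_k$, so the nilpotent part $N_k$ vanishes and $P_k$ has rank one. A rank-one spectral idempotent attached to a simple eigenvalue is forced to have the form $P_k=v_kw_k^{T}/(w_k^{T}v_k)$, where $v_k$ and $w_k$ are right and left eigenvectors for $\lambda_k$ and $w_k^{T}v_k\neq 0$; the nonvanishing is the standard fact that left and right eigenvectors of a simple eigenvalue are not orthogonal, and it is exactly what makes the normalization $w_k^{T}v_k=1$ from the statement admissible. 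With that normalization $P_k=v_kw_k^{T}$, so the sum of the dominant contributions in the display above is $\sum_{k=0}^{h-1}\lambda_k^{\,n}v_kw_k^{T}=\sum_{k=0}^{h-1}(\zeta^k\lambda)^{n}v_kw_k^{T}$, which is precisely the asserted main term.

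It remains to estimate the remainder $E_n$, the sum over the eigenvalues $\mu_j$ with $|\mu_j|<\lambda$ (if every eigenvalue has modulus $\lambda$, then $E_n=0$ and the identity is exact). Each such $\mu_j$ satisfies $|\mu_j|\leq|\lambda^{sec}|<\lambda$, and the number of nonzero terms $\tbinom{n}{t}\mu_j^{\,n-t}N_j^{t}$ is bounded by a fixed $D\leq m$. A crude bound ($\tbinom{n}{t}\leq n^{t}$, with $|\mu_j|^{-t}$ bounded over the finitely many nonzero $\mu_j$, while a zero eigenvalue contributes nothing once $n>D$) gives, entrywise, $|E_n|\leq C\,n^{D}|\lambda^{sec}|^{\,n}$ with $C$ independent of $n$. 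Thus $M^n$ equals the main term plus $E_n=O(n^{D}|\lambda^{sec}|^{n})$ entrywise; for any $\gamma$ with $|\lambda^{sec}/\lambda|<\gamma<1$ we have $n^{D}|\lambda^{sec}|^{n}/(\gamma\lambda)^{n}=n^{D}\big(|\lambda^{sec}|/(\gamma\lambda)\big)^{n}\to 0$, so the error is $o(\lambda^{n})$, the convergence is geometric with ratio $|\lambda^{sec}/\lambda|$, and the entrywise asymptotic equality $\sim$ follows. The only genuine work here is the bookkeeping with the Jordan blocks of the subdominant eigenvalues — the polynomial factor $n^{D}$, which is harmless because it is absorbed by any $\gamma>|\lambda^{sec}/\lambda|$ — together with pinning down $P_k=v_kw_k^{T}$; everything else is routine linear algebra on top of the two Perron--Frobenius inputs.
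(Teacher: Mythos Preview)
Your argument is correct and follows essentially the same route as the paper: decompose $\C^{m}$ into generalized eigenspaces, use \autoref{T:ProofPFTheorem} to see that the spaces for the pseudo-dominant eigenvalues are one-dimensional with rank-one projector $v_{k}w_{k}^{T}$, and observe that the contribution of the remaining eigenvalues is $o(\lambda^{n})$ with the stated geometric ratio. The only difference is cosmetic: you spell out the Jordan--nilpotent bookkeeping and the polynomial factor $n^{D}$ explicitly, whereas the paper absorbs this into a single sentence about $R(n)/\lambda^{n}\to 0$.
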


\begin{proof}
This is known, but proofs are a bit tricky to find in the literature, so we give one. 
The proof also shows where the vectors $v_{i}$ and $w_{i}$ come from.

For any $\mu\in\C$, let $V_{\mu}$ be the generalized eigenspace of $V=\C^{m}$ associated to the eigenvalue $\mu$. Then we have
\begin{gather*}
V=
\bigoplus_{k=0}^{h}V_{\zeta^{k}\lambda}\oplus
\bigoplus_{\mu,|\mu|<\lambda}V_{\mu}.
\end{gather*}
By \autoref{T:ProofPFTheorem}, the space $V_{\zeta^{k}\lambda}$ is the eigenspace associated to the eigenvalue $\zeta^{k}\lambda$ and $v_{k}w_{k}^{T}$ is the projection onto that subspace.

This implies that we have
\begin{gather*}
M^{n}=v_{0}w_{0}^{T}\cdot\lambda^{n}
+v_{1}w_{1}^{T}\cdot(\zeta\lambda)^{n}
+v_{2}w_{2}^{T}\cdot(\zeta^2\lambda)^{n}+\dots
+v_{h-1}w_{h-1}^{T}\cdot(\zeta^{h-1}\lambda)^{n}
+R(n),
\end{gather*}
where $R(n)$ is the multiplication action of $M^{n}$ onto the rest. Since the eigenvalues $\mu$ of $M$ on the rest satisfies $|\mu|<\lambda$, we have $R(n)/\lambda^{n}\to_{n\to\infty}0$ geometrically with ratio $|\lambda^{sec}/\lambda|$.
\end{proof}

For a general matrix $M\in\rmat$ things change, but not too much:

\begin{Theorem}[Perron--Frobenius theorem part III]\label{T:ProofPFTheoremThree}
Let $M\in\rmat$.
\begin{enumerate}

\item $M$ has a \emph{Perron--Frobenius eigenvalue}, that is, 
$\lambda\in\Rplus$ such that $\lambda\geq|\mu|$ for all other eigenvalues $\mu$.

\item Let $s$ be the multiplicity of the Perron--Frobenius eigenvalue.
There exists $(h_{1},\dots,h_{s})\in\Z_{\geq 1}^{s}$, the \emph{periods}, such that all eigenvalues $\mu$ with $\lambda=|\mu|$ are $\exp(k2\pi i/h_{\ell})\lambda$ for $k\in\{0,\dots,h_{\ell}-1\}$, for some period. We call these \emph{pseudo-dominant eigenvalues}.

\item The eigenvectors, left and right, for the Perron--Frobenius eigenvalues can be normalized to have entries in $\Rplus$.

\item Let $h=\mathrm{lcm}(h_{1},\dots,h_{s})$, and 
let $\nu$ the maximal dimension of the Jordan blocks of $M$ containing $\lambda$.
There exist matrices 
$S^{i}(n)$ with polynomial entries of degree $\leq(\nu-1)$ for $i\in\{0,\dots,h-1\}$ such that
\begin{gather*}
\lim_{n\to\infty}|\left(M/\lambda\right)^{hn+i}-S^{i}(n)|\to 0
\quad\forall i\in\{0,\dots,h-1\},
\end{gather*}
and the convergence is geometric with ratio $|\lambda^{sec}/\lambda|^{h}$.
There are also explicit formulas for the matrices $S^{i}(n)$, see \cite[Section 5]{Ro-expansion-sums-matrix-powers}.
\end{enumerate}

\end{Theorem}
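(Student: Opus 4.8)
The plan is to reduce everything to the irreducible case already treated in \autoref{T:ProofPFTheorem} and \autoref{T:ProofPFTheoremTwo} by means of the \emph{Frobenius normal form}: after a simultaneous permutation of rows and columns, $M$ becomes block upper triangular with diagonal blocks $M_{1},\dots,M_{p}$, each of which is either irreducible or a $1$-by-$1$ zero block; these blocks are the strongly connected components of the adjacency graph, partially ordered by reachability. The spectrum of $M$ (with multiplicities) is the union of the spectra of the $M_{\ell}$. Hence \emph{(a)} follows: $\lambda=\rho(M)=\max_{\ell}\rho(M_{\ell})$ is an eigenvalue because every irreducible $M_{\ell}$ contributes its Perron--Frobenius eigenvalue by \autoref{T:ProofPFTheorem}. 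Calling a diagonal block \emph{basic} if $\rho(M_{\ell})=\lambda$, the basic blocks are precisely the ones contributing eigenvalues of modulus $\lambda$, all other blocks having spectral radius strictly below $\lambda$. Applying \autoref{T:ProofPFTheorem}(b) to each basic block yields \emph{(b)}: the basic block $M_{\ell}$ has a period $h_{\ell}$ and peripheral eigenvalues $\exp(2\pi ik/h_{\ell})\lambda$, and the peripheral spectrum of $M$ is their union (one pads the list of periods with $1$'s if one insists on having exactly $s$ of them, with $s$ the algebraic multiplicity of $\lambda$). For \emph{(c)}, a left or right eigenvector of $M$ for $\lambda$ restricts on each basic block to a Perron--Frobenius eigenvector of that block, hence to a nonnegative vector by \autoref{T:ProofPFTheorem}(c); propagating these through the triangular structure gives nonnegative eigenvectors of $M$, which is the classical statement that the $\lambda$-eigenspace of a nonnegative matrix admits a nonnegative basis (see \cite{Fr-werke}).

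For \emph{(d)} I would argue via the spectral (generalized eigenspace) decomposition, which also makes it manifest that this statement specializes to \autoref{T:ProofPFTheoremTwo} when $\nu=1$. Assume $\lambda>0$ (otherwise $M$ is nilpotent and the statement is empty). Write $\C^{m}=W\oplus W^{\prime}$, where $W=\bigoplus_{|\mu|=\lambda}V_{\mu}$ is the sum of the generalized eigenspaces for the peripheral eigenvalues and $W^{\prime}=\bigoplus_{|\mu|<\lambda}V_{\mu}$; both are $M$-invariant. On $W^{\prime}$ the operator $(M/\lambda)^{h}$ has spectral radius $|\lambda^{sec}/\lambda|^{h}<1$, so $(M/\lambda)^{hn+i}|_{W^{\prime}}=\bigl((M/\lambda)^{h}\bigr)^{n}(M/\lambda)^{i}|_{W^{\prime}}$ tends to $0$ geometrically with ratio $|\lambda^{sec}/\lambda|^{h}$ (the polynomial Jordan prefactor is absorbed into any strictly larger ratio); this is the error term $R^{i}(n)$. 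On $W$, each peripheral eigenvalue has the form $\mu=\omega_{\mu}\lambda$ with $\omega_{\mu}$ an $h_{\ell}$-th, hence an $h$-th, root of unity. Writing $M|_{V_{\mu}}=\mu\,\mathrm{Id}+N_{\mu}$ with $N_{\mu}$ nilpotent and using $\omega_{\mu}^{hn}=1$,
\[
(M/\lambda)^{hn+i}\big|_{V_{\mu}}=\omega_{\mu}^{i}\Bigl(\mathrm{Id}+\tfrac{1}{\lambda\omega_{\mu}}N_{\mu}\Bigr)^{hn+i}=\omega_{\mu}^{i}\sum_{j\geq 0}\binom{hn+i}{j}\Bigl(\tfrac{1}{\lambda\omega_{\mu}}N_{\mu}\Bigr)^{j},
\]
a finite sum whose entries are polynomials in $n$ of degree at most $(\text{Jordan index of }\mu)-1$. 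Collecting these over all peripheral $\mu$ and returning to the standard basis defines the matrix $S^{i}(n)$, and then $(M/\lambda)^{hn+i}=S^{i}(n)+R^{i}(n)$ by construction.

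The remaining point, and the one I expect to be the real obstacle, is the \emph{sharp} degree bound $\nu-1$: the argument above only gives degree $\max_{|\mu|=\lambda}(\text{Jordan index of }\mu)-1$, and to replace this maximum by the Jordan index $\nu$ of $\lambda$ alone one needs the (nonelementary) fact that the peripheral spectrum of a nonnegative matrix is rotationally self-similar, i.e., multiplication by $\exp(2\pi i/h)$ preserves the Jordan structure of the peripheral part, so every peripheral eigenvalue has the same index $\nu$ as $\lambda$. This, together with the explicit form of the $S^{i}(n)$, is exactly what is worked out in \cite[Section 5]{Ro-expansion-sums-matrix-powers}, which we may simply invoke for the details not reproduced here.
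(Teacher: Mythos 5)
Your argument is mathematically sound, but it is worth noting that the paper's own ``proof'' of this theorem is a one-line citation to \cite{Ro-expansion-sums-matrix-powers} (and \cite{Ho-handbook-linear-algebra}), so you are supplying genuine content where the paper supplies none. Your route --- Frobenius normal form to reduce to the irreducible blocks, then the generalized-eigenspace splitting $\C^{m}=W\oplus W^{\prime}$ with a binomial expansion of $(M/\lambda)^{hn+i}$ on the peripheral part --- is the standard way to derive parts (a)--(d) from \autoref{T:ProofPFTheorem}, and you correctly isolate the one nonelementary input, namely that the Jordan index of every peripheral eigenvalue is controlled by the index $\nu$ of $\lambda$ itself (the Rothblum index theorem, proved in the cited paper). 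Two small caveats: for (c), the ``propagation through the triangular structure'' deserves a word, since solving $(\lambda I-M_{\ell'})v_{\ell'}=(\text{lower terms})$ for a non-basic block above a basic one uses that $\lambda I-M_{\ell'}$ is an M-matrix with nonnegative inverse $\sum_{k\geq 0}M_{\ell'}^{k}/\lambda^{k+1}$; and your phrase ``every peripheral eigenvalue has the \emph{same} index $\nu$ as $\lambda$'' overstates what is needed (and, depending on the precise formulation, what is true) --- the bound index$(\mu)\leq\nu$ for $|\mu|=\lambda$ suffices and is what Rothblum's argument gives directly. Neither issue affects the validity of your proof. In short: correct, and more informative than the paper's own treatment.
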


\begin{proof}
This can be found in \cite{Ro-expansion-sums-matrix-powers}. See also 
\cite[Section I.10]{Ho-handbook-linear-algebra} (in the second version) for a useful list of properties of nonnegative matrices.
\end{proof}


\subsection{Three versions of the {\mt}}\label{SS:ProofVersions}


We recall based algebras. These algebras originate in work of Lusztig on 
so-called \emph{special} representations of Weyl groups \cite{Lu-irreps-weyl-I}.
We follow \cite[Section 2]{KiMa-based-algebras} with our definition.

Let $\K\subset\C$ be a unital subring. A $\K$-algebra $R$ with a finite 
$\K$-basis $C=\{1=c_{0},\dots,c_{r-1}\}$ is called a \emph{finite 
based $\Rplus$-algebra} if all structure constants are in $\Rplus$ 
with respect to the basis $C$. That is, \eqref{Eq:IntroBased} holds.

The underlying ring $\K$ is allowed to be different from $\R$ or $\C$, but it needs to contain the structure constants of course. When the structure constants are in $\N\subset\Rplus$ a popular choice for the ground ring is $\K=\Z$.

\begin{Example}\label{E:ProofBased}
Examples include:
\begin{enumerate}

\item The Grothendieck rings of all the examples in \autoref{E:IntroFiniteCats}. In these examples one often takes $\K=\Z$, but other rings are allowed as well.

\item Group or more general semigroup algebras for finite groups or semigroups.

\item There are many interesting infinite examples coming from 
skein theory, see {\eg} \cite{Th-positive-skein}.	

\end{enumerate}
Decategorifications are our main examples where $\Rplus$ 
can be replaced by $\N$.
\end{Example}

A finite based $\Rplus$-algebra is actually a pair $(R,C)$, but we will 
write $R$ for short. Next, fix such an $R$ and $c\in\Rplus C$.
In this setting we can define the \emph{(pre) action matrix} $M^{\prime}(c)_{k,j}=\sum_{i}a_{i}m_{i,j}^{k}\in\Rplus$. 
The \emph{action matrix} $M(c)$ is then the adjacency matrix for the 
connected component, in the nonoriented sense, of the identity $1\in C$ in the adjacency graph of $M^{\prime}(c)$.
Note that $M(c)\in\rmat$ is a submatrix of $M^{\prime}(c)\in\rmat[r]$ 
for some $1\leq m\leq r$.

We give three versions of {\mt}, stated in terms of finite based $\Rplus$-algebras. 
The categorical version then follows immediately from \autoref{L:IntroGrothendieck}.

\begin{Theorem}[Version 1]\label{T:ProofMainOne}
Fix a finite based $\Rplus$-algebra $R$, and $c$ a $\Rplus$-linear combination of elements 
from $C$. Assume that the action matrix $M(c)$ is \textbf{irreducible}.
Then \autoref{T:IntroMain} holds with $\mainsymbol(n)$ as in \eqref{Eq:IntroMainSymbol}.
\end{Theorem}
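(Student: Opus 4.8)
The plan is to reduce the statement about the coefficient-sum function $b^{R,c}(n)$ to the matrix-power asymptotics recorded in \autoref{T:ProofPFTheoremTwo}, applied to $M = M(c)$. The first step is to express $b^{R,c}(n)$ in terms of entries of $M^{n}$. Writing $c^{n} = \sum_{k} m_{n}^{k}(c)\cdot c_{k}$, I claim that $m_{n}^{k}(c)$ is exactly the $(k,0)$-entry of $M^{\prime}(c)^{n}$ (the pre-action matrix raised to the $n$-th power), since left-multiplication by $c$ on $\Rplus C$ is represented in the basis $C$ by $M^{\prime}(c)$ and $c^{n}\cdot 1 = c^{n}$. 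Because $1 = c_{0}$ lies in the chosen connected component, all these entries already live in the submatrix block $M(c)$: any basis element $c_{k}$ reachable from $c_{0}$ with a nonzero coefficient is in that component, so $m_{n}^{k}(c)$ vanishes for $k$ outside the component. Hence $b^{R,c}(n) = \sum_{k} m_{n}^{k}(c) = \mathbbm{1}^{T} M(c)^{n} e_{0}$, where $e_{0}$ is the standard basis vector at the position of $1$ and $\mathbbm{1}$ is the all-ones vector; equivalently it is the sum of the entries of the first column of $M(c)^{n}$, i.e.\ $M(c)^{n}[1]$ in the notation of the introduction.

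The second step is to feed this into \autoref{T:ProofPFTheoremTwo}. Since $M(c)$ is irreducible by hypothesis, that theorem gives
\begin{gather*}
M(c)^{n}\sim v_{0}w_{0}^{T}\cdot\lambda_{0}^{n}+v_{1}w_{1}^{T}\cdot(\zeta\lambda_{0})^{n}+\dots+v_{h-1}w_{h-1}^{T}\cdot(\zeta^{h-1}\lambda_{0})^{n},
\end{gather*}
entrywise, with geometric convergence of ratio $|\lambda^{sec}/\lambda_{0}|$, where $\lambda_{0} = \pfdim c$. Summing the first column of both sides and using that the operation ``sum of the first column'' is linear, the right-hand side becomes precisely $\mainsymbol(n)$ as defined in \eqref{Eq:IntroMainSymbol}: the $k$-th term contributes $v_{k}w_{k}^{T}[1]\cdot(\zeta^{k})^{n}(\pfdim c)^{n}$. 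Because taking a fixed finite linear combination of matrix entries preserves entrywise geometric convergence with the same ratio, we get $b^{R,c}(n)\sim\mainsymbol(n)$ with the stated ratio $|\lambda^{sec}/\pfdim c|$. Finally, $\beta^{R,c} = \lim_{n\to\infty}\sqrt[n]{b_{n}^{R,c}} = \pfdim c$ follows because $b_{n}^{R,c}/(\pfdim c)^{n}$ converges to the almost-periodic (and not identically zero, being bounded away from $0$ along at least one residue class mod $h$ by positivity of the Perron--Frobenius data) sequence $\mainsymbol(n)/(\pfdim c)^{n}$, so its $n$-th root tends to $1$.

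The one point that needs a little care — and I expect it to be the main obstacle, though a minor one — is checking that $\mainsymbol(n)$ is not eventually zero, so that $\sim$ is meaningful and the $n$-th root limit is genuinely $1$ rather than something smaller. This uses part (c) of \autoref{T:ProofPFTheorem}: $v_{0}$ and $w_{0}$ can be taken with strictly positive entries (irreducibility forces strict positivity), so $v_{0}w_{0}^{T}[1] > 0$; since $|\zeta^{k}| = 1$, the modulus of $\mainsymbol(n)/(\pfdim c)^{n}$ is bounded below along the arithmetic progression $n\equiv 0\pmod h$ by $v_{0}w_{0}^{T}[1] - \sum_{k\geq 1}|v_{k}w_{k}^{T}[1]|$ — and more robustly, averaging $\mainsymbol(n)/(\pfdim c)^{n}$ over a period of length $h$ yields $v_{0}w_{0}^{T}[1] > 0$, so the sequence cannot be eventually zero. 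A secondary bookkeeping point is the identification of $m_{n}^{k}(c)$ with a matrix entry in the presence of the ``connected component'' truncation; this is where one invokes that the component of $1$ is closed under the multiplication (any $c_{k}$ appearing in $c_{0}\cdot(\text{something supported on the component})$ is again in the component), which is immediate from the definition of the adjacency graph. Everything else is routine linear algebra already packaged in \autoref{T:ProofPFTheoremTwo}.
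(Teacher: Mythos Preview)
Your proposal is correct and follows essentially the same approach as the paper: interpret $b^{R,c}(n)$ as the sum of the first column of $M(c)^{n}$ via the iteration $M(c)^{n}c(0)=c(n)$, then invoke \autoref{T:ProofPFTheoremTwo}. You supply more detail than the paper does on the truncation to the connected component and on why $\mainsymbol(n)$ is not eventually zero, but the core argument is identical.
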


\begin{proof}
Consider the following matrix equation:
\begin{gather*}
M(c)c(n-1)=c(n),
\end{gather*}
where $c(k)=\big(c_{0}(k),\dots,c_{r-1}(k)\big)\in\Rplus^{r}$ are 
vectors such that their $i$th entry is the 
multiplicity of $c_{i}$ in $c^{k}$, and $c(0)=(1,0,\dots,0)^{T}$ with the 
one is in the slot of $c_{0}=1$. This equation holds by 
the definition of the action matrix.
Iterating this process, we get
\begin{gather*}
M(c)^{n}c(0)=c(n).
\end{gather*}
Note that $M(c)^{n}c(0)$ is the same as taking the first column of $M(c)^{n}$. 
Hence,
\begin{gather*}
b^{R,c}(n)=M(c)^{n}[1]
\end{gather*}
in the notation of the introduction. Thus, \autoref{T:ProofPFTheoremTwo} implies the result.
\end{proof}

\begin{Remark}\label{R:ProofMainOne}
\autoref{T:ProofMainOne} is sufficient for many example. Explicitly, 
\autoref{T:ProofMainOne} works for all \emph{transitive} finite based $\Rplus$-algebras.
Examples include all finite monoidal categories that are rigid by 
\cite[Proposition 4.5.4]{EtGeNiOs-tensor-categories}.
\end{Remark}

We say that $M\in\rmat$ has the \emph{Perron--Frobenius property} 
if its Perron--Frobenius eigenvalue has multiplicity one.

\begin{Theorem}[Version 2]\label{T:ProofMainTwo}
Fix a finite based $\Rplus$-algebra $R$, and $c$ an $\Rplus$-linear combination of elements 
from $C$. Assume that the action matrix $M(c)$ has the \textbf{Perron--Frobenius property}.
Then \autoref{T:IntroMain} holds with $\mainsymbol(n)$ as in \eqref{Eq:IntroMainSymbol}.
\end{Theorem}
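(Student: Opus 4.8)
The plan is to run the argument for \autoref{T:ProofMainOne} almost verbatim, feeding in \autoref{T:ProofPFTheoremThree} in place of \autoref{T:ProofPFTheoremTwo}. Exactly as in the proof of \autoref{T:ProofMainOne}, the recursion $M(c)c(n-1)=c(n)$ with $c(0)=(1,0,\dots,0)^{T}$ gives $M(c)^{n}c(0)=c(n)$, so
\begin{gather*}
b^{R,c}(n)=M(c)^{n}[1],
\end{gather*}
the sum of the first column of $M(c)^{n}$. Thus everything reduces to the asymptotics of the matrix powers $M(c)^{n}$, which is precisely what \autoref{T:ProofPFTheoremThree} controls once we feed in the \textbf{Perron--Frobenius property}.

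Next I would unwind \autoref{T:ProofPFTheoremThree} under that hypothesis. Since the Perron--Frobenius eigenvalue $\lambda=\pfdim c$ has multiplicity one, we have $s=1$ there, so there is a single period $h$, the pseudo-dominant eigenvalues are precisely $\zeta^{k}\lambda$ with $\zeta=\exp(2\pi i/h)$ and $k\in\{0,\dots,h-1\}$, and the maximal Jordan block size $\nu$ attached to $\lambda$ equals $1$. Invoking the standard structure theory of nonnegative matrices --- the value $\lambda$ is realized by a unique strongly connected component, which forces each $\zeta^{k}\lambda$ to be algebraically simple, and the peripheral eigenvalues inherit the Jordan index of $\lambda$, hence are semisimple --- the limit matrices $S^{i}(n)$ of \autoref{T:ProofPFTheoremThree} are the constant spectral (Riesz) projections: if $v_{k},w_{k}$ are left/right eigenvectors for $\zeta^{k}\lambda$ normalized by $w_{k}^{T}v_{k}=1$, then the projection onto the $\zeta^{k}\lambda$-eigenspace is $v_{k}w_{k}^{T}$, and
\begin{gather*}
M(c)^{n}\sim v_{0}w_{0}^{T}\cdot\lambda^{n}+v_{1}w_{1}^{T}\cdot(\zeta\lambda)^{n}+\dots+v_{h-1}w_{h-1}^{T}\cdot(\zeta^{h-1}\lambda)^{n},
\end{gather*}
with the remaining part of $M(c)^{n}$ being $O\big(\mathrm{poly}(n)\cdot|\lambda^{sec}|^{n}\big)$; dividing by $\lambda^{n}$ this remainder tends to $0$ geometrically with ratio $|\lambda^{sec}/\lambda|$ (the polynomial factor is absorbed, and the $h$-th power in the rate of \autoref{T:ProofPFTheoremThree} disappears once one passes from the residue classes mod $h$ back to the full sequence). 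This is the analogue of \autoref{T:ProofPFTheoremTwo} with irreducibility dropped.

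Then I would take the sum of the first column on both sides. Using $\zeta^{kh}=1$ this yields
\begin{gather*}
b^{R,c}(n)=M(c)^{n}[1]\sim\big(v_{0}w_{0}^{T}[1]\cdot 1+v_{1}w_{1}^{T}[1]\cdot\zeta^{n}+\dots+v_{h-1}w_{h-1}^{T}[1]\cdot(\zeta^{h-1})^{n}\big)\cdot(\pfdim c)^{n}=\mainsymbol(n),
\end{gather*}
with geometric convergence of ratio $|\lambda^{sec}/\pfdim c|$, which is the first part of \autoref{T:IntroMain}. For the last statement, one checks that $v_{0}w_{0}^{T}[1]>0$: by \autoref{T:ProofPFTheoremThree} the eigenvectors $v_{0},w_{0}$ can be taken nonnegative, and $v_{0}w_{0}^{T}[1]$ equals the coordinate sum of $v_{0}$ times the entry of $w_{0}$ in the slot of $1$, the latter being nonzero because the dominant strongly connected component is accessible from $1$ in the adjacency graph of $M(c)$. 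Since the remaining summands of $\mainsymbol(n)$ are bounded in $n$, $\mainsymbol(n)$ is a bounded-oscillation positive multiple of $(\pfdim c)^{n}$, so $\sqrt[n]{b^{R,c}_{n}}\to\pfdim c$ and $\beta^{R,c}=\pfdim c$.

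The main obstacle is exactly the one step where irreducibility is genuinely replaced by the Perron--Frobenius property: one must be sure that the \emph{only} eigenvalues of modulus $\pfdim c$ are the simple, semisimple ones $\zeta^{k}\lambda$ and that their spectral projections factor as the rank-one matrices $v_{k}w_{k}^{T}$ --- this is what guarantees that no $n$-dependent polynomial term survives at the dominant scale and that the formula \eqref{Eq:IntroMainSymbol} is literally correct --- together with the accessibility observation ensuring the leading coefficient does not secretly vanish. All of this is part of the Perron--Frobenius package recorded in \autoref{T:ProofPFTheoremThree}; once it is in place, the proof is no longer than that of \autoref{T:ProofMainOne}.
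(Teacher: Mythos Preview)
Your argument is essentially the paper's: reduce to the first column of $M(c)^{n}$ via the iteration from \autoref{T:ProofMainOne}, then feed the Perron--Frobenius property ($s=1$, hence $\nu=1$) into \autoref{T:ProofPFTheoremThree} to identify the limit matrices with the rank-one spectral projections $v_{k}w_{k}^{T}$; the paper does the last identification by quoting Rothblum's explicit formulas for the $S^{i}$, while you do it by bare spectral theory, but this is only a cosmetic difference.

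One small slip in the extra step you add for $\beta^{R,c}=\pfdim c$: with the paper's convention (edge $i\to j$ when $M_{ij}>0$), the support of the \emph{left} Perron--Frobenius eigenvector $w_{0}$ consists of the indices reachable \emph{from} the basic class, so $(w_{0})_{0}>0$ is equivalent to ``$1$ is accessible from the dominant SCC'', not to ``the dominant SCC is accessible from $1$'' as you write. In fact the latter already fails in the toy example $C=\{1,x\}$, $x^{2}=x$, $c=x$, where $0$ has no outgoing edges at all; nevertheless $(w_{0})_{0}>0$ there for the correct reason. The paper itself does not argue this point and simply records the $\beta$-statement as an ``in particular'', so your overall comparison with the paper's proof is unaffected, but if you want to keep this extra justification you should flip the accessibility direction and say why it holds.
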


\begin{proof}
The iteration works as in the proof of \autoref{T:ProofMainOne}, so let us focus on the growth rate. We will use \autoref{T:ProofPFTheoremThree} for $s=1$. This implies 
that $\nu=1$, by its definition. In particular, we only have
$S^{i}(n)$ with entries of degree zero, so these are matrices that do not depend on $n$, so we can simply write $S^{i}$. We will argue that they 
are essentially the matrices $v_{i}w_{i}^{T}$.

Precisely, as follows from \cite[Section 5]{Ro-expansion-sums-matrix-powers}, we have
\begin{align*}
S^{i}
&=
v_{0}w_{0}^{T}
+v_{1}w_{1}^{T}\cdot\zeta^{i}
+v_{2}w_{2}^{T}\cdot\zeta^{2i}+\dots
+v_{h-1}w_{h-1}^{T}\cdot\zeta^{(h-1)i}
\\
&=
v_{0}w_{0}^{T}
+v_{1}w_{1}^{T}\cdot\zeta^{nh+i}
+v_{2}w_{2}^{T}\cdot\zeta^{2(nh+i)}+\dots
+v_{h-1}w_{h-1}^{T}\cdot\zeta^{(h-1)(nh+i)i}
.
\end{align*}
Now we apply \autoref{T:ProofPFTheoremThree}.(c).
\end{proof}

\begin{Remark}\label{R:ProofMainTwo}
\autoref{T:ProofMainTwo} is the version we used in \autoref{S:Examples}.
\end{Remark}

Recall that the polynomials $S^{i}(n)$ are explicitly given in 
\cite[Section 5]{Ro-expansion-sums-matrix-powers} and define:
\begin{gather}\label{Eq:ProofMainSymbol}
\mainsymbol(n)=
\frac{1}{h}\sum_{i=0}^{h-1}\sum_{j=0}^{h-1}S^{j}\big(\lfloor n/h\rfloor\big)\cdot\zeta^{i(n-j)}
.
\end{gather}

\begin{Theorem}[Version 3]\label{T:ProofMainThree}
Fix a finite based $\Rplus$-algebra $R$, and $c$ an $\Rplus$-linear combination of elements 
from $C$. Then \autoref{T:IntroMain} holds with $\mainsymbol(n)$ as in \eqref{Eq:ProofMainSymbol}.
\end{Theorem}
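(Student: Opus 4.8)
The plan is to reduce to the same iteration scheme used in the proofs of Versions 1 and 2, and then to feed the general matrix-power expansion of \autoref{T:ProofPFTheoremThree} into it. First I would recall that, exactly as in the proof of \autoref{T:ProofMainOne}, the multiplicity vectors $c(n)$ satisfy $M(c)^{n}c(0)=c(n)$ with $c(0)=(1,0,\dots,0)^{T}$, so that $b^{R,c}(n)$ is the sum of the entries of the first column of $M(c)^{n}$, i.e.\ $b^{R,c}(n)=M(c)^{n}[1]$ in the notation of the introduction. The point is that this identity makes no use of irreducibility or of the Perron--Frobenius property, so it is available here verbatim. Thus the entire content of the theorem is the asymptotic analysis of $M(c)^{n}[1]$ when $M=M(c)$ is an arbitrary nonnegative matrix whose adjacency graph has the identity in its connected component.

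Next I would invoke \autoref{T:ProofPFTheoremThree}.(d): with $h=\mathrm{lcm}(h_{1},\dots,h_{s})$ and $\nu$ the maximal size of a Jordan block of $M$ for the Perron--Frobenius eigenvalue $\lambda=\pfdim c$, there are matrices $S^{i}(n)$ with polynomial entries of degree $\leq\nu-1$ such that $(M/\lambda)^{hn+i}-S^{i}(n)\to 0$ geometrically with ratio $|\lambda^{sec}/\lambda|^{h}$, for each residue $i\in\{0,\dots,h-1\}$. Writing $n=h\lfloor n/h\rfloor+(n\bmod h)$, this says $M^{n}\sim S^{\,n\bmod h}\big(\lfloor n/h\rfloor\big)\cdot\lambda^{n}$ entrywise, with the stated geometric ratio. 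Taking the sum over the first column of both sides, and recalling that $\mainsymbol(n)$ in \eqref{Eq:ProofMainSymbol} is built from precisely the $S^{j}$ (the double sum $\frac{1}{h}\sum_{i,j}S^{j}(\lfloor n/h\rfloor)\zeta^{i(n-j)}$ is just the discrete-Fourier device that selects the residue $j\equiv n\pmod h$, since $\frac{1}{h}\sum_{i=0}^{h-1}\zeta^{i(n-j)}$ equals $1$ if $h\mid n-j$ and $0$ otherwise, so the expression collapses to $S^{\,n\bmod h}(\lfloor n/h\rfloor)$ — but one must be slightly careful that here $\mainsymbol(n)$ still carries the implicit factor $\lambda^{n}$ coming from the normalization of $S^{i}(n)$ as coefficients of $(M/\lambda)^{n}$, matching the conventions of \cite[Section 5]{Ro-expansion-sums-matrix-powers}), we conclude $b^{R,c}(n)=M^{n}[1]\sim\mainsymbol(n)$ with convergence geometric of ratio $|\lambda^{sec}/\lambda|$. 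The final assertion $\beta^{R,c}=\pfdim c$ then follows since $\mainsymbol(n)$ grows like $(\text{poly in }n)\cdot(\pfdim c)^{n}$, so $\sqrt[n]{b_{n}^{R,c}}\to\pfdim c$.

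The main obstacle is bookkeeping rather than anything conceptual: one has to match the normalization in \eqref{Eq:ProofMainSymbol} with the explicit formulas for $S^{i}(n)$ in \cite[Section 5]{Ro-expansion-sums-matrix-powers}, check that the periods $h_{\ell}$ and Jordan data genuinely correspond to pseudo-dominant eigenvalues so that the residue decomposition mod $h$ is the right one, and verify that passing from the geometric ratio $|\lambda^{sec}/\lambda|^{h}$ along each arithmetic progression $n\equiv i\pmod h$ back to the ratio $|\lambda^{sec}/\lambda|$ for the full sequence is harmless (it is, since on each of the finitely many progressions the error is $O(\gamma^{hn})$ for every $\gamma>|\lambda^{sec}/\lambda|$, hence $O(\gamma^{n})$ overall). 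I would also remark that Versions 1 and 2 are the special cases $\nu=1$, $s=1$ (and, for Version 1, $\nu=s=1$ with $M$ irreducible), so this third version genuinely subsumes them, and that when $\nu>1$ the symbol $\mainsymbol(n)$ is no longer of the pure form \eqref{Eq:IntroMainSymbol} but acquires polynomial-in-$n$ coefficients, which is why the more elaborate formula \eqref{Eq:ProofMainSymbol} is needed.
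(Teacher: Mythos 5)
Your proposal is correct and takes essentially the same route as the paper: the paper's own proof is a one-liner --- ``Observing that $1+\zeta^{i}+\dots+\zeta^{(h-1)i}=0$ if $i\not\equiv 0\bmod h$, this follows as for the previous theorems'' --- which is precisely the iteration-plus-discrete-Fourier-selector argument you spell out. Your extra bookkeeping (collapsing the double sum in \eqref{Eq:ProofMainSymbol} to $S^{\,n\bmod h}(\lfloor n/h\rfloor)$, flagging the implicit $\lambda^{n}$ normalization and the first-column sum, and checking that the ratio $|\lambda^{sec}/\lambda|^{h}$ along residue classes mod $h$ gives back $|\lambda^{sec}/\lambda|$ for the full sequence) is a faithful, more explicit version of what the paper leaves to the reader.
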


\begin{proof}
Observing that $1+\zeta^{i}+\dots+\zeta^{(h-1)i}=0$ if $i\not\equiv 0\bmod h$, this follows as for the previous theorems.
\end{proof}

\end{document}